\theoremstyle{plain}
\numberwithin{equation}{section} 
\newtheorem{theorem}[subsection]{Theorem}
\newtheorem{proposition}[subsection]{Proposition}
\newtheorem{lemma}[subsection]{Lemma}
\theoremstyle{remark}
\newtheorem{remark}[subsection]{Remark}
\renewcommand{\leq}{\leqslant}
\renewcommand{\geq}{\geqslant}
\newsavebox{\proofbox}
\savebox{\proofbox}{\begin{picture}(7,7)  \put(0,0){\framebox(7,7){}}\end{picture}}
\newcommand\E{\mathbb{E}}
\newcommand\R{\mathbb{R}}
\newcommand\PV{\textrm{P}(V)}
\newcommand\p{\mathbb{P}}
\newcommand\C{\mathbb{C}}
\newcommand\N{\mathbb{N}}
\newcommand\SL{\operatorname{SL}}
\newcommand\GL{\operatorname{GL}}
\newcommand\End{\operatorname{End}}
\newcommand\diag{\operatorname{diag}}
\newcommand{\efface}[1]{}
\begin{document}

\title[Law of large numbers for the spectral radius]{Law of large numbers for the spectral radius of random matrix products}

\author{Richard Aoun}
\address{American University of Beirut, Department of Mathematics, Faculty of Arts and Sciences, P.O. Box 11-0236 Riad El Solh, Beirut 1107 2020, Lebanon}
\email{ra279@aub.edu.lb}
\author{Cagri Sert}
\address{\textsc{Departement Mathematik, ETH Z\"{u}rich, 101 R\"{a}mistrasse, 8092, Z\"{u}rich, Switzerland}}
\email{sertcagri@gmail.com}

\subjclass[2020]{37H15, 60F15, 15A18, 60B15}

\begin{abstract}
We prove that the spectral radius of an i.i.d.\ random walk on $\GL_d(\C)$ satisfies a strong law of large numbers under finite second moment assumption and a weak law of large numbers under finite first moment. No irreducibility assumption is supposed.
\end{abstract} 
\maketitle

\section{Introduction}

Let $\mu$ be a probability measure on $\GL_d(\mathbb{C})$. For $n \in \mathbb{N}$, denote by $\mu^{\ast n}$ the $n$-fold convolution of $\mu$ by itself. This is the distribution of the product $X_n \cdots X_1$, where $X_i$'s are independent and identically distributed (i.i.d.) random variables with distribution $\mu$. Recall that a probability measure $\mu$ on $\GL_d(\mathbb{C})$ is said to have finite $k^{th}$ moment if 
$$
\int (\log N(g))^k d\mu(g) < \infty,
$$
where $N(g)=\max \{||g||, ||g^{-1}||\}$ for some operator norm $||.||$. This definition does not depend on the choice of the operator norm. 

The classical law of large numbers of Furstenberg-Kesten \cite{furstenberg-kesten} describes the asymptotic growth rate of the product $L_n:=X_n \cdots X_1$ \textit{in norm}. More precisely, it says that if $\mu$ has finite first moment, then
$$
\frac{1}{n} \log ||X_n \cdots X_1|| \underset{n \to \infty}{\overset{a.s.\ }{\longrightarrow}} \lambda_1(\mu),
$$
where the constant $\lambda_1(\mu) \in \mathbb{R}$ is defined by this almost sure convergence and called the first Lyapunov exponent of $\mu$. Nowadays, this convergence can be deduced from the subadditive ergodic theorem due to Kingman \cite{kingman}. 

The aim of this paper is to establish the analogous results for the \emph{spectral radius}. Unlike the operator norm, the behaviour of the spectral radius is highly chaotic under multiplication of matrices. For instance, it does not enjoy a simple property such  as submultiplicativity. 

Our first result is the following law of large numbers for the spectral radius under finite second moment assumption. For  $g\in \GL_d(\C)$, let $\rho(g)$ denote its spectral radius. 

\pagebreak
\begin{theorem}[Strong law of large numbers]\label{main}
Let $\mu$ be a probability measure on  $\GL_d(\mathbb{C})$ with finite second moment and $(X_n)_{n\in \N}$ a sequence of independent random variables with distribution $\mu$. For every $n\in \N$, let $L_n=X_n\cdots X_1$. Then
$$
\frac{1}{n} \log \rho(L_n) \underset{n \to \infty}{\overset{a.s.\ }{\longrightarrow}} \lambda_1(\mu). 
$$
\end{theorem}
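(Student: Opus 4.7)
My plan is to prove the lower bound $\liminf_n \frac{1}{n} \log \rho(L_n) \geq \lambda_1(\mu)$ almost surely; the upper bound is immediate from $\rho(g) \leq \|g\|$ combined with the Furstenberg--Kesten theorem. The strategy proceeds via three successive reductions.

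\textit{First, an algebraic reduction to the irreducible case.} I would let $\Gamma$ be the subsemigroup of $\GL_d(\C)$ generated by $\Supp(\mu)$ and choose a $\Gamma$-invariant complete flag $0 = V_0 \subsetneq \cdots \subsetneq V_r = \C^d$ with irreducible successive quotients $W_i = V_i/V_{i-1}$. Writing $L_n$ in block-upper-triangular form with diagonal blocks $\bar L_n^{(i)} \in \GL(W_i)$, one has $\rho(L_n) = \max_i \rho(\bar L_n^{(i)})$ by triangularity and (one must prove) $\lambda_1(\mu) = \max_i \lambda_1(\bar\mu_i)$, showing that off-diagonal unipotent growth cannot exceed the maximal block-diagonal growth. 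This reduces the problem to the case where $\mu$ acts irreducibly.

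\textit{Second, an exterior power reduction to simple top Lyapunov exponent.} In the irreducible case, let $k$ be the multiplicity of $\lambda_1(\mu)$ in the Lyapunov spectrum. The representation $\wedge^k \mu$ has top exponent $k\lambda_1(\mu)$, which is simple (since $\lambda_1 + \cdots + \lambda_k > (k-1)\lambda_1 + \lambda_{k+1}$). The inequality $\rho(\wedge^k g) = |\alpha_1(g) \cdots \alpha_k(g)| \leq \rho(g)^k$, for eigenvalues $\alpha_j(g)$ of $g$ ordered by decreasing modulus, shows that a lower bound for $\rho(\wedge^k L_n)$ transfers to the desired bound for $\rho(L_n)$, so we may assume the top Lyapunov exponent of $\mu$ is simple.

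\textit{Third, the proximality argument.} Assuming $\lambda_1 > \lambda_2$, I would invoke Oseledets's theorem to obtain an almost sure measurable splitting $\C^d = E^+_\omega \oplus F^-_\omega$ into the $\lambda_1$-Oseledets line and the complementary stable hyperplane, covariant under the shift. The key step is to show that the projective orbit $[L_n E^+_\omega]$ converges almost surely in $\P(\C^d)$ to some $\xi_\omega$, and that $\xi_\omega$ is close to an eigendirection of $L_n$ for large $n$, yielding an eigenvalue of modulus $\|L_n\| \cdot e^{o(n)}$ and therefore $\frac{1}{n}\log \rho(L_n) \to \lambda_1(\mu)$ almost surely. \textbf{The main obstacle is this third step:} converting the Oseledets splitting into a genuine dominant eigenvalue \emph{almost surely} requires quantitative control on the angle between $L_n E^+_\omega$ and $L_n F^-_\omega$, and this is precisely where the second-moment hypothesis enters --- it should provide the $L^2$-fluctuation bounds on $\log \|L_n v\|$ that give summable Borel--Cantelli tails, upgrading in-probability convergence (which under first moments alone would suffice only for the weak law announced in the abstract) to almost sure convergence.
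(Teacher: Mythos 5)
Your high-level strategy — prove the lower bound, reduce via exterior powers to the case of a simple top Lyapunov exponent, then argue that the attracting direction of $L_n$ yields a genuine dominant eigenvalue — is indeed the same skeleton as the paper's proof. But there are two issues, one minor and one major.

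The minor issue is your first reduction. It is not wrong in itself: $\rho$ of a block-upper-triangular matrix is the max of the $\rho$'s of the diagonal blocks, and $\lambda_1(\mu)=\max_i\lambda_1(\bar\mu_i)$ by Furstenberg--Kifer/Hennion, so the reduction to irreducible diagonal blocks is legitimate. However, it is wasted effort, because after you apply $\wedge^k$ in step two the resulting representation is in general no longer irreducible, so you cannot carry irreducibility into step three anyway. The paper never reduces to the irreducible case; instead it works throughout with the Furstenberg--Kifer subspace $L_\mu$ (the largest invariant subspace with strictly smaller exponent), and all its quantitative estimates — in particular Proposition~\ref{largedeviations1} — are stated relative to $\delta([v],[L_\mu])$ so that reducibility is never an obstacle. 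This is the cleaner route.

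The major issue is that your third step — which you yourself flag as the main obstacle — is where the whole proof lives, and your sketch of it is both incomplete and not quite correctly set up. For the \emph{one-sided} product $L_n=X_n\cdots X_1$, Oseledets gives a random filtration $V\supsetneq F^<(\omega)\supsetneq\cdots$, not a splitting $E^+_\omega\oplus F^-_\omega$; and the attracting direction $x_{L_n}^+$ of the left random walk does \emph{not} converge (only the attracting direction of the right walk $R_n$ stabilizes). So the object you would need to control is not a limiting Oseledets line but the instantaneous distance $\delta(x_{L_n}^+,H_{L_n}^<)$ between the attracting point and the repelling hyperplane of $L_n$ itself, and the content of Lemma~\ref{geometry} is that when this distance is not too small relative to $a_2(L_n)/a_1(L_n)$, the spectral radius is comparable to the norm. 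The paper proves (Proposition~\ref{prop.distance}) that $\mathbb{P}(\delta(x_{L_n}^+,H_{L_n}^<)\leq e^{-\epsilon n})$ is summable, and this is genuinely nontrivial: it exploits that $H_{L_n}^< = (\C x_{L_n^\ast}^+)^\perp$ where $L_n^\ast$ is a \emph{right} random walk for $\check\mu$, so the repelling hyperplane of $L_n$ stabilizes exponentially fast (Lemma~\ref{lemlem2}); it then splits $L_n=M_n\,L_{\lfloor n/2\rfloor}$ with independent halves and uses a uniform anti-concentration estimate near hyperplanes (Lemma~\ref{lemlem4}), together with the fact that $x_{L_n^\ast}^+$ stays away from $[L_{\check\mu}]$ (Lemma~\ref{while}). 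Simply invoking ``quantitative control on the angle via $L^2$-fluctuation bounds and Borel--Cantelli'' does not produce this argument; in particular the independence trick and the hyperplane regularity estimate are the two ideas your sketch is missing. You have correctly identified that the second moment enters to make the Borel--Cantelli tails summable (this is exactly the role of Theorem~\ref{BQlarge}), so the diagnosis of where the moment hypothesis is used is sound; it is the mechanism for exploiting it that remains to be supplied.
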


Our second result is the weak law of large numbers for the spectral radius under the optimal assumption of finite first moment.

\begin{theorem}[Weak law of large numbers]\label{main1}
Let $\mu$ be a probability measure on  $\GL_d(\mathbb{C})$ with finite first moment. With the notation of Theorem \ref{main},  $\frac{1}{n} \log \rho(L_n)$ converges to $\lambda_1(\mu)$ in $\textrm{L}^1$, i.e.  
$$\E \left( \big| \frac{1}{n}\log\rho(L_n) - \lambda_1(\mu) \big| \right) \underset{n\to+\infty}{\longrightarrow} 0.$$
In particular,   $\frac{1}{n}\log\rho(L_n)$ converges to $\lambda_1(\mu)$ in  probability. 
\end{theorem}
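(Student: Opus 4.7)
The plan is to deduce Theorem \ref{main1} from Theorem \ref{main} by a truncation argument, combined with the classical $L^1$-convergence of $\frac{1}{n}\log\|L_n\|$ to $\lambda_1(\mu)$ under finite first moment (Kingman's subadditive ergodic theorem applied to the cocycle $\log\|L_n\|$). My first move is to establish uniform integrability of $\{\tfrac{1}{n}\log\rho(L_n)\}_n$ via the sandwich
$$
\frac{1}{nd}\sum_{i=1}^n \log|\det X_i|\ \leq\ \frac{1}{n}\log\rho(L_n)\ \leq\ \frac{1}{n}\log\|L_n\|,
$$
where the left inequality uses $|\det L_n|\leq \rho(L_n)^d$. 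Both extremes converge in $L^1$ under finite first moment: the left by the classical $L^1$-LLN (using $|\log|\det X_1||\leq d\log N(X_1)\in L^1(\mu)$), the right by Kingman. In particular the upper tail $\p\!\left(\tfrac{1}{n}\log\rho(L_n)>\lambda_1(\mu)+\eps\right)\to 0$ is immediate from the right-hand inequality, so once convergence in probability from below is established, $L^1$ convergence follows from the uniform integrability.

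For the lower tail I would use truncation. For $R>0$, let $\mu_R$ denote the law of $X_1$ conditioned on $\{N(X_1)\leq R\}$, a probability measure with bounded support, hence satisfying the finite second moment assumption. Writing $L_n^{(R)}$ for the product of $n$ i.i.d.\ copies from $\mu_R$, Theorem \ref{main} yields $\tfrac{1}{n}\log\rho(L_n^{(R)})\to\lambda_1(\mu_R)$ almost surely. A separate continuity step --- using that $\mu_R\to\mu$ in total variation with $\int \log N\, d\mu_R\to\int\log N\, d\mu$, together with the variational characterization $\lambda_1(\mu)=\inf_{n\geq 1}\tfrac{1}{n}\E_\mu\log\|L_n\|$ and dominated convergence --- gives $\lambda_1(\mu_R)\to\lambda_1(\mu)$ as $R\to\infty$.

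The main obstacle is the coupling step: transferring the lower bound in probability from $L_n^{(R)}$ back to the original product $L_n$, uniformly in $n$ as $R\to\infty$. A direct coupling through the event that no $X_i$ with $i\leq n$ has $N(X_i)>R$ is useless, its probability $(1-p_R)^n$ (where $p_R=\mu\{N>R\}$) decaying exponentially for fixed $R$. I expect the correct approach to be a block decomposition: cut the product at the sparse set of ``bad'' indices $\{i\leq n:N(X_i)>R\}$ (whose cardinality concentrates around $np_R$ by the classical LLN), and exploit the cyclic invariance $\rho(AB)=\rho(BA)$ to bring a long consecutive run of good matrices --- forming an i.i.d.\ $\mu_R$ sub-product to which Theorem \ref{main} applies --- to the front. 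The multiplicative contribution of the bad factors is then controlled by their cumulative log-norm, whose expectation per unit length equals $\int_{\{N>R\}}\log N\, d\mu$ and tends to zero as $R\to\infty$ by the finite first moment of $\mu$.
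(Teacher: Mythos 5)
Your uniform integrability argument is fine (and in fact a touch cleaner than the paper's, which uses $N(g)^{-1}\leq\rho(g)\leq N(g)$ together with submultiplicativity of $N$); the continuity step $\lambda_1(\mu_R)\to\lambda_1(\mu)$ also goes through by the variational formula and dominated convergence. But the coupling step, which you correctly flag as the crux, has a genuine gap, and it is not a technical one: controlling the cumulative log-norm of the ``bad'' factors bounds their effect on $\|L_n\|$, not on $\rho(L_n)$. The spectral radius can collapse under insertion of matrices of norm one. Concretely, if $G$ is a long good sub-product with attracting point $x_G^+$ and repelling hyperplane $H_G^<$, a single bad matrix $B$ with $\log N(B)$ tiny can still send $B\cdot x_G^+$ into $H_G^<$ and make $\rho(GB)$ exponentially smaller than $\|GB\|$. (This is exactly the mechanism behind Example~\ref{markov} in \S\ref{end.proof}: there the bad elements are permutation matrices of norm $1$, and yet $\rho(L_n)=1$ along a subsequence of density bounded below.) In the i.i.d.\ setting such conspiratorial alignment is unlikely, but establishing that requires regularity/anti-concentration estimates for the repelling hyperplane against fixed directions --- which is precisely the content of Lemma~\ref{lemlem4} and Proposition~\ref{prop.distance}, i.e.\ the machinery you were hoping to bypass by truncation. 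Cyclic invariance $\rho(AB)=\rho(BA)$ lets you move one good block to the front, but the rest of the product, bad factors included, still multiplies it and can destroy proximality.

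The paper's route is genuinely different: instead of deducing Theorem~\ref{main1} from Theorem~\ref{main} by truncation, it re-derives qualitative ($D_n\to 0$ rather than $\sum D_n<\infty$) versions of all the geometric estimates under finite first moment. The new ingredient replacing the $\textrm{L}^2$ large deviations is Lemma~\ref{lemma.ps}, a uniform-in-direction strong law proved via Oseledets' theorem and the Furstenberg--Kifer/Hennion description of $L_\mu$; this feeds into Proposition~\ref{large.deviations.2} and then into the chain Lemmas~\ref{lemlem1}--\ref{lemlem4} and Proposition~\ref{prop.distance.1}, after which the proof of Theorem~\ref{main} runs verbatim to give convergence in probability. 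Your uniform-integrability observation then closes the $\textrm{L}^1$ upgrade just as in the paper. If you want to salvage the truncation strategy, you would still need an analogue of Lemma~\ref{lemlem4} to show that the bad factors do not align the attracting direction with the repelling hyperplane, at which point you have reconstructed most of the paper's argument anyway.
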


\begin{remark}[Convergence of the moduli of other eigenvalues]
More generally, our results yield the corresponding laws of large numbers for the moduli of \textit{all} eigenvalues of the random matrix products: given $g \in \GL_d(\mathbb{C})$, denote by $\vec{\rho}(g)=\left(\rho_1(g), \ldots, \rho_d(g)\right)$ 
the $d$-vector of moduli of eigenvalues of $g$ ordered by decreasing order. Then, if $\mu$ has finite second moment the following  convergence holds almost surely 
$$\frac{1}{n} \log \vec{\rho}(L_n) \underset{n\to +\infty}{\longrightarrow} \vec{\lambda}(\mu),$$ where $\vec{\lambda}(\mu)=\left( \lambda_1(\mu), \cdots, \lambda_d(\mu) \right)$ is the Lyapunov vector of $\mu$ (see \S \ref{sub.RMP}). The analog result for the weak law is also true. 
\end{remark}

A strong law for the spectral radius was previously proved by Guivarc'h \cite[Th\'eor\`eme 8]{Guivarch3} and Benoist--Quint \cite[Theorem 14.12]{BQ.book} under two additional assumptions. The first one is of algebraic nature and supposes that the Zariski closure of the  group $G_\mu$ generated by the support of $\mu$ acts strongly irreducibly on $\mathbb{C}^d$. The second one is a moment hypothesis and supposes that $\mu$ has finite exponential moment. 

Our approach is purely dynamical: it is based on the first Lyapunov gap, i.e. the integer $s=1, \ldots, d$, such that $\lambda_1(\mu)=\cdots = \lambda_s(\mu)>\lambda_{s+1}(\mu)$. 
This allows us to obtain Theorems \ref{main} and \ref{main1} without the strong irreducibility assumption. Such an approach is inspired by \cite{Aoun-Guivarch}. The improvement concerning the moment assumption on $\mu$ is based on large deviation estimates for cocycles established by Benoist--Quint in \cite{BQ.CLT}.

A notable feature of the laws of large numbers for the spectral radius concerns the i.i.d.\ assumption. The Furstenberg--Kesten law of large numbers for the norm holds for any 
random product $L_n=X_n \cdots X_1$ with ergodic stationary increments $X_i$. In Section \ref{end.proof}, we present a simple example of an ergodic stationary random walk with Markovian increments for which Theorems \ref{main1} and \ref{main} fail. Similar examples can be found in the work of Avila--Bochi \cite{avila-bochi}. On the other hand, we recall that for any ergodic stationary random walk $L_n$, 
$$\underset{n\rightarrow +\infty}{\limsup}\frac{1}{n}\log \rho(L_n) \overset{\text{a.s.\ } }{=}\lambda_1(\mu),$$ thanks to Morris \cite{morris} (see also \cite{avila-bochi} and \cite{oregonreyes}).

Our proof of the strong law of large numbers is based on a careful quantitative analysis of the distance between the attractive point and the repelling hyperplane  in the projective space of random products $L_n$. This approach based on, among others, large deviation estimates is the reason why we are only able to prove the strong law of large numbers under finite second moment assumption. The corresponding question with finite first moment remains open.  
On the other hand, the proof of the weak law needs only a qualitative version of the proof of the strong law. This accounts for the weaker and optimal finite first moment assumption in Theorem \ref{main}.

Finally we note that both of Theorems \ref{main1} and \ref{main} are also valid for the right random walk $R_n=X_1 \cdots X_n$ under the same assumptions, by considering the  transposed random walk.  Also, Theorems \ref{main} and \ref{main1} remain true if we replace $\C$ by any local field of arbitrary characteristic. Since the proofs can be adapted   without any substantial difficulty, we will restrict ourselves to $\C$.

\subsection*{Acknowledgments} Both authors have the pleasure to thank AUB, CAMS and ETH  Z\"{u}rich for hospitality and excellent working conditions. C.S.\  is supported by SNF grant 178958.

\section{Preliminaries}

\subsection{Linear algebra}
We write $V=\mathbb{C}^d$; we endow $V$ with the canonical Hermitian structure and denote by $||.||$ the associated norm on $V$   as well as the operator norm on $\End(V)$. For convenience, we shall work with this choice norm but we remind that our results and hypotheses do not depend on such a choice of norm. We denote by $\bigwedge^k V$ the $k^{th}$-exterior power of $V$ that we endow with the induced Hermitian structure.

Let $\textrm{P}(V)$ denote the projective space of $V$. For every non-zero vector $v \in V$ (resp.\ non-trivial subspace $E$ of $V$), we denote by $[v]=\C v$ (resp.~$[E]$) its image in $\textrm{P}(V)$. We  endow $\textrm{P}(V)$ with the Fubini-Study metric $\delta$ defined by: 
$$\forall \, x=[v], y=[w]\in \textrm{P}(V), \delta(x,y):=\frac{|| v \wedge w||}{||v||\,||w||}.$$
The action of $g\in \textrm{GL}(V)$ on a vector $v$ will be simply denoted by $g v$, while the action of $g$ on a point $x \in \textrm{P}(V)$ will be denoted by $g\cdot x$. 

Let $(e_1, \ldots, e_d)$ be the canonical basis of $V$. Denote by $K=U_d(\C)$ the unitary group, by $A\subset \textrm{GL}_d(\C)$ the subgroup of diagonal matrices with positive real coefficients. We denote by $A^+ \subset A$ the sub-semigroup given by $A^+:=\{\diag(a_1,\ldots,a_d) \, | \, a_1 \geq a_2 \geq  \cdots \geq a_d \}$. The KAK decomposition (or the singular value decomposition) states that  $\textrm{GL}_d(\C)=K A^+ K$. For an element $g\in \textrm{GL}_d(\C)$, we denote by $g=k_g a(g) l_g$ a KAK decomposition of $g$, i.e. $k_g,l_g \in K$ and $a(g)=\diag(a_1(g), \ldots, a_d(g)) \in A^+$. In this decomposition, the middle factor $a(g)$ is uniquely defined. Moreover, $$\frac{a_2(g)}{a_1(g)}=\frac{||\bigwedge ^2 g||}{||g||^2}.$$
Even though the choice of $k_g$ and $l_g$ is not unique, we fix once for all a privileged choice of a KAK decomposition. We call \emph{attracting point} and \emph{repelling hyperplane}, respectively, the following point in $\textrm{P}(V)$ and projective hyperplane of $\textrm{P}(V)$: 

$$x_g^+=[k_g e_1] \,\,\,,\,\,\,  H_g^{<} = [\langle l_g^{-1} e_2, \cdots, l_g^{-1} e_d \rangle].$$
Observe that $$H_g^<=(\C\, x_{g^\ast}^+)^\perp.$$

We finally recall the following lemma that allows one to compare the norm and the spectral radius of a matrix using the geometry of its projective action. 

\begin{lemma}\cite[Lemma 14.14]{BQ.book}
Let $g\in \textrm{GL}(V)$.
If $\delta(x_g^+, H_g^{<})> 2 \sqrt{\frac{a_2(g)}{a_1(g)}}$, then 
$$  \frac{\rho(g)}{||g||} \geq   \frac{\delta(x_g^+, H_g^{<})}{2}.$$
\label{geometry}\end{lemma}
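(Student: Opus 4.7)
The plan is to find an eigenvector of $g$ inside a small projective ball around $x_g^+$ and then read off the modulus of its eigenvalue from the position of this eigenline relative to the repelling hyperplane. Setting $\delta_0 := \delta(x_g^+, H_g^<)$ and $\alpha := a_2(g)/a_1(g)$, the hypothesis becomes $\delta_0 > 2\sqrt{\alpha}$. The natural radius is $r := \delta_0/2$, and the goal is to prove that $g$ maps the closed ball $B := \{x \in \P(V) : \delta(x, x_g^+) \leq r\}$ into itself and then invoke Brouwer.

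The two ingredients come from the KAK decomposition $g = k_g a(g) l_g$. Working in the orthonormal basis $f_i := l_g^{-1} e_i$ and writing a unit vector $v = \sum_i c_i f_i$, I would first note that $\delta([v], H_g^<) = |c_1|$ because $H_g^< = \langle f_2, \ldots, f_d\rangle$, and then extract from a short computation (using the unitarity of $k_g$ on $V$ and on $\bigwedge^2 V$) the two inequalities
\begin{equation*}
\|gv\| \geq a_1(g)\,|c_1| = \|g\|\,\delta([v], H_g^<) \qquad \textrm{and} \qquad \delta(g\cdot [v],\, x_g^+) \leq \frac{\alpha\,\sqrt{1-|c_1|^2}}{|c_1|} \leq \frac{\alpha}{\delta([v], H_g^<)}.
\end{equation*}
For $[v] \in B$ the triangle inequality yields $\delta([v], H_g^<) \geq \delta_0 - r = \delta_0/2$, and the second estimate then becomes $\delta(g\cdot [v], x_g^+) \leq 2\alpha/\delta_0 < \delta_0/2 = r$, the strict inequality being exactly the hypothesis. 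Hence $g\cdot B \subset B$.

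Since $r \leq 1/2 < 1$, the ball $B$ is homeomorphic to a closed Euclidean disk in $\R^{2d-2}$ (parametrised in the affine chart around $x_g^+$), so Brouwer's theorem supplies a fixed point $[v^*] \in B$; writing $g v^* = \lambda v^*$, the first estimate above yields $|\lambda| \geq \|g\|\,\delta([v^*], H_g^<) \geq \|g\|\,\delta_0/2$, and $\rho(g) \geq |\lambda|$ gives the conclusion. The main subtlety will be organising the projective geometry so that the two estimates interlock with the hypothesis $\delta_0 > 2\sqrt{\alpha}$; granting that, Brouwer is a soft touch, though one could alternatively iterate $g$ on a slightly smaller ball and extract a limiting eigenline directly.
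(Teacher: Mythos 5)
Your argument is correct and matches the approach of the cited Benoist--Quint lemma: both establish that the projective ball of radius $\delta(x_g^+,H_g^<)/2$ around $x_g^+$ is mapped into itself by $g$ and hence contains a $g$-fixed line $[v^*]$, and both conclude via the lower bound $\|g v^*\| \geq \|g\|\,\delta([v^*],H_g^<)$. The only inessential difference is that you appeal to Brouwer to produce the fixed point rather than observing, as Benoist--Quint do, that $g$ acts as a contraction on this ball and iterating.
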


\subsection{Some background on random matrix products}\label{sub.RMP} In this part, we recall some classical results of random matrix products theory, and prove Proposition \ref{largedeviations1} and Proposition \ref{large.deviations.2}. The former is based on large deviations for cocycles due to Benoist--Quint (Theorem  \ref{BQlarge}) and will be key to prove the strong law with \textit{a finite second moment}.  The latter  will be useful  for the weak law  under \textit{a finite first moment}. It relies on Lemma \ref{lemma.ps} which generalizes a known fact  under strong irreducibility assumption. 

\bigskip

Let $\mu$ be a probability measure on $\GL(V)$ with finite first moment. We denote by   $G_{\mu}$ the  group generated by its support. Let $L_\mu$ be the subspace of $V$ given by
$$L_{\mu}=\{v\in V \setminus \{0\} \, | \; \underset{n\rightarrow +\infty}{\limsup} \frac{1}{n} \log ||L_n v ||<\lambda_1(\mu) \; \textrm{a.s.}\}.$$
Furstenberg--Kifer \cite[Theorem 3.9]{furstenberg-kifer} and  Hennion \cite[Th\'{e}or\`{e}me 1]{hennion} showed that $L_{\mu}$ is the largest $G_{\mu}$-invariant subspace of $V$ such that the Lyapunov exponent of the restriction of $L_n$ is strictly smaller than $\lambda_1(\mu)$. Moreover, for every $v\in V\setminus \{0\}$,  the $\limsup$ in the definition is actually an almost sure limit. 
 
A probability measure $\nu$ on $\PV$ is said to be $\mu$-stationary if $\mu \ast \nu = \nu$, in other words, if for every continuous function $f$ on $\PV$, $\iint {f(g\cdot x)\,d\mu(g)\,d\nu(x)}=\int{f \,d\nu}$. Furstenberg--Kifer \cite{furstenberg-kifer} and Hennion \cite{hennion} showed that $\lambda_1(\mu)$ can be expressed as \begin{equation}\label{double}\lambda_1(\mu)=\sup\left\{\iint \log\frac{||g v||}{||v||}\,d\mu(g) d\nu([v]) \,  \bigg|   \, \mu \ast \nu = \nu\right\}.\end{equation}
In particular, when $L_{\mu}=\{0\}$, for every $\mu$-stationary probability measure $\nu$ on $\PV$, 
$\iint{\log\frac{||g v||}{||v||}\,d\mu(g)\,d\nu([v])}=\lambda_1(\mu)$, i.e.\ the additive cocycle $\sigma(g,[v]):=\log \frac{||g v||}{||v||}$ has unique cocycle average. Here, by an additive cocycle, we mean a function $\sigma$ on $G \times X$ that satisfies $\sigma(gh,x)=\sigma(g,h\cdot x)+\sigma(h,x)$ for every $(g,h)\in G^2$ and $x\in X$. Furthermore, if $\pi: G_{\mu} \longrightarrow \GL(V/L_\mu)$ is the canonical projection and $\pi_\ast \mu$ is the pushforward of $\mu$ by $\pi$, then since the Lyapunov exponent of the restriction of $L_n$ to $L_\mu$ is strictly smaller than $\lambda_1(\mu)$, it follows from \cite[Lemma 3.6]{furstenberg-kifer} that we have $\lambda_1(\pi_\ast \mu)=\lambda_1(\mu)$  and $L_{\pi_\ast \mu}=\{0\}$.

Below, we recall the following 

\begin{theorem}
\cite[Proposition 3.2]{BQ.CLT}\label{BQlarge}
Let $G$ be a locally compact group, $X$ a compact metrizable G-space, $\mu$ a   probability measure on $G$. Let $\sigma: G\times X \longrightarrow \R$  be a cocycle such that 
$\int_{G}{\left(\sup_{x\in X}{|\sigma(g, x)|}\right)^2 d\mu(g)}<+\infty$.  Let  $\sigma_\mu^+$ and $\sigma_\mu^-$ be its    upper and lower average, i.e. 
$$\sigma_\mu^+ =\sup\left\{\iint{\sigma(g,x)\,d\mu(g) d\nu(x) \big| \mu \ast  \nu=\nu}\right\}$$ and   similarly for $\sigma_{\mu}^-$ with $\sup$ replaced by $\inf$.  Then, for any $\epsilon>0$, there exists a sequence $D_n$ of positive reals with $\sum_nD_n<+\infty$ such that for every $n\in \N$ and $x\in X$, $$\p\left(\frac{\sigma(L_n, x)}{n} \in [\sigma_{\mu}^- -\epsilon, \sigma_\mu^+ +\epsilon]\right)>1-D_n.$$
\end{theorem}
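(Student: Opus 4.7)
The plan is to decompose $\sigma(L_n,x)$ along the Markov chain $x_k:=L_k\cdot x$ on $X$ and to exploit martingale concentration together with a Krylov--Bogolyubov-type argument for the drift. By the cocycle identity, $\sigma(L_n,x)=\sum_{k=1}^{n}\sigma(X_k,x_{k-1})$. Setting $\mathcal{F}_k:=\sigma(X_1,\ldots,X_k)$ and $f(y):=\int_G\sigma(g,y)\,d\mu(g)$, I would write $\sigma(L_n,x)=M_n+S_n$ with
$$
M_n:=\sum_{k=1}^{n}\bigl(\sigma(X_k,x_{k-1})-f(x_{k-1})\bigr), \qquad S_n:=\sum_{k=1}^{n}f(x_{k-1}).
$$
The finite second moment hypothesis bounds $f$ uniformly on $X$ and gives a uniform-in-$x$ $L^2$ control on the martingale increments of $M_n$.

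For the drift term $S_n/n$, the key input is a Krylov--Bogolyubov computation: every weak-$\ast$ accumulation point of the empirical occupation measures $\bar\nu_n^x:=\frac{1}{n}\sum_{k=0}^{n-1}\delta_{x_k}$ is $\mu$-stationary, because $\|\mu\ast\bar\nu_n^x-\bar\nu_n^x\|_{\mathrm{TV}}=O(1/n)$; by definition of the cocycle averages, any such limit $\nu$ satisfies $\int f\,d\nu\in[\sigma_\mu^-,\sigma_\mu^+]$. Hence $S_n/n$ eventually lies within $\epsilon/2$ of this interval, with a rate which can be quantified uniformly in $x\in X$ using compactness of $X$ and of the set of $\mu$-stationary probability measures. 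For the martingale $M_n/n$, I would employ a block decomposition: partition $\{1,\ldots,n\}$ into blocks of length $\ell=\ell(\epsilon)$ chosen so that for each $y\in X$ the normalized block variance $\E[(\sigma(L_\ell,y)-\ell f(y))^2]/\ell$ is small; combining an Azuma-type estimate on the coarse-grained martingale with Doob's $L^2$ maximal inequality along a dyadic subsequence of $n$ and Borel--Cantelli should yield a summable sequence $D_n^{(1)}$ such that $\p(|M_n|/n>\epsilon/2)\leq D_n^{(1)}$, uniformly in $x$.

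The main obstacle is producing summability $\sum_n D_n<\infty$ under only the second moment assumption: naive $L^2$-maximal inequalities give rates of order $1/n$, which are not summable. The crucial structural feature that rescues the argument is that one is not asking for concentration around a specific value, but for containment in the \emph{interval} $[\sigma_\mu^--\epsilon,\sigma_\mu^++\epsilon]$, a substantially weaker requirement. This slack is exactly what a block-and-chain argument exploits: within a sufficiently long block, the conditional cocycle average lies inside $[\sigma_\mu^-,\sigma_\mu^+]$ up to the Krylov--Bogolyubov drift, and summing block contributions produces summable tails for the $\epsilon$-enlarged event. A secondary subtlety is the uniformity in the starting state $x\in X$; this is handled by the uniform-in-$x$ $L^2$ bound on $\sigma$ combined with the compactness of $X$, so that no $x$-dependent constants leak into the final $D_n$.
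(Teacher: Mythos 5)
This statement is not proved in the paper: it is quoted verbatim as Proposition~3.2 of Benoist--Quint \cite{BQ.CLT}, so there is no ``paper's own proof'' to compare against. Judged on its own terms and against the argument in \cite{BQ.CLT}, your plan has the right skeleton (Doob decomposition of the cocycle into a martingale plus a drift, and a Krylov--Bogolyubov mechanism to control the drift), but two of the three steps as written have genuine gaps.

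First, the Krylov--Bogolyubov step. The statement $\|\mu\ast\bar\nu_n^x-\bar\nu_n^x\|_{\mathrm{TV}}=O(1/n)$ is correct for the \emph{averaged} (deterministic) measures $\frac{1}{n}\sum_{k<n}\mu^{\ast k}\ast\delta_x$, but not for the random occupation measures $\bar\nu_n^x=\frac{1}{n}\sum_{k<n}\delta_{x_k}$: the discrepancy between $\delta_{x_k}$ and $\mu\ast\delta_{x_{k-1}}$ is itself a martingale increment, not $O(1/n)$. The way this is handled in \cite{BQ.CLT} is by a coboundary/resolvent decomposition: using compactness of $X$ and of the set of $\mu$-stationary measures, one produces a \emph{bounded continuous} $\psi$ on $X$ with $f\leq \sigma_\mu^+ +\epsilon + \psi - P\psi$ (and the companion lower bound), where $P$ is the Markov operator. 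Plugging this into $S_n=\sum f(x_{k-1})$ converts the drift correction into a telescoping boundary term plus a \emph{bounded-difference} martingale, and only then is the desired uniform drift bound obtained. Your proposal asserts the conclusion of this lemma without the mechanism that yields it along the random trajectory.

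Second, and more seriously, the martingale concentration. With only an $L^2$ hypothesis the increments of $M_n$ are not bounded, so Azuma does not apply, coarse-grained or not; and the quantity you propose to make ``small,'' $\E\bigl[(\sigma(L_\ell,y)-\ell f(y))^2\bigr]/\ell$, is in fact of constant order in $\ell$ for a non-degenerate martingale, so the block length cannot be tuned to shrink it. What actually rescues the argument is a structural fact you do not use: the increments of $M_n$ are dominated in absolute value by the \emph{i.i.d.} sequence $\sigma^\ast(X_k):=\sup_{x\in X}|\sigma(X_k,x)|$, which has finite second moment by hypothesis. This domination permits a truncation at a fixed level $T$: off a bad event whose probabilities are summable in $n$ by the second moment of $\sigma^\ast$ (a Baum--Katz/Hsu--Robbins--type computation), the truncated martingale has bounded differences and Azuma gives exponential, hence summable, tails. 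Without identifying this domination you have no route to summability; the slack provided by the interval $[\sigma_\mu^--\epsilon,\sigma_\mu^++\epsilon]$ absorbs the drift error but does nothing to repair a non-summable martingale tail bound.

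In short: the decomposition is right, the drift treatment needs the coboundary lemma rather than a TV estimate on random empirical measures, and the martingale step needs the i.i.d.\ $L^2$ domination by $\sigma^\ast(X_k)$ followed by truncation; the block-plus-Azuma argument as stated does not close.
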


We note that similar large deviation estimates under weaker moment assumptions were obtained by Cuny--Dedecker--Merlev\`{e}de in \cite{cuny}.

The following result will be crucial to our considerations. 

\begin{proposition}\label{largedeviations1} Let $\mu$ be a probability measure on $\GL(V)$. Assume that $\mu$ has finite second moment. Then for every $\epsilon>0$, there exists a sequence $(D_n)_{n\in \N}$ of positive reals with $\sum_n{D_n}<+\infty$ such that for every $n\in \N$ and  $v\in V\setminus \{0\}$, 
\begin{equation}\label{y1}\p \left( \delta([v], [L_{\mu}])\, e^{n(\lambda_1(\mu) - \epsilon)} \leq \frac{||L_n v||}{||v||} \leq e^{n(\lambda_1(\mu) + \epsilon)}\right) >1-D_n,\end{equation}
and 
\begin{equation}\label{y2}\p \left(e^{n(\lambda_1(\mu) - \epsilon)} \leq  ||L_n|| \leq e^{n(\lambda_1(\mu) + \epsilon)}\right) >1-D_n.\end{equation}
\end{proposition}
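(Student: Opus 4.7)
The plan is to apply the Benoist--Quint large deviation estimate (Theorem \ref{BQlarge}) to the additive cocycle $\sigma(g,[v])=\log(\|gv\|/\|v\|)$ on $X=\PV$. The finite second moment hypothesis is tailored to the integrability condition of the theorem, since $\sup_{[v]\in\PV}|\sigma(g,[v])|=\log N(g)$. By formula (\ref{double}), the upper cocycle average satisfies $\sigma^+_\mu=\lambda_1(\mu)$. Therefore, fixing $v$ and applying Theorem \ref{BQlarge} immediately yields the upper bound in (\ref{y1}) with a summable sequence $D_n$ independent of $v$. To obtain the upper bound in (\ref{y2}), I would apply this pointwise upper bound to each of the canonical basis vectors $e_1,\ldots,e_d$ and combine it with $\|L_n\|\leq d\max_i\|L_n e_i\|$: a union bound gives the desired estimate, after replacing $D_n$ by $dD_n$ and absorbing the factor $d$ into $\epsilon$.

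The lower bound in (\ref{y1}) is the core of the proposition and is where the Furstenberg--Kifer subspace $L_\mu$ enters. In general nothing prevents $\sigma^-_\mu$ from being strictly smaller than $\lambda_1(\mu)$, namely along directions inside $L_\mu$, so a direct application of Theorem \ref{BQlarge} on $\PV$ is too weak. The trick is to pass to the quotient: consider the canonical $G_\mu$-equivariant projection $\pi:V\to V/L_\mu$, equip $V/L_\mu$ with the quotient Hermitian structure, and form the analogous cocycle $\tilde\sigma(g,[w])=\log(\|gw\|/\|w\|)$ for $\pi_\ast\mu$ on $\p(V/L_\mu)$. The preliminary material already recalled gives $\lambda_1(\pi_\ast\mu)=\lambda_1(\mu)$ and $L_{\pi_\ast\mu}=\{0\}$; the latter, via (\ref{double}), forces $\tilde\sigma$ to have a unique cocycle average, so $\tilde\sigma^\pm_{\pi_\ast\mu}=\lambda_1(\mu)$. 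Applying Theorem \ref{BQlarge} to $\tilde\sigma$ at the point $[\pi(v)]$ (for $v\notin L_\mu$), together with the two elementary identities $\|L_n v\|\geq \|\pi(L_n v)\|=\|\tilde L_n\pi(v)\|$ (equivariance of $\pi$, and the quotient norm is norm-decreasing) and $\|\pi(v)\|/\|v\|=\delta([v],[L_\mu])$ (the quotient norm equals the distance from $v$ to $L_\mu$ in the Hermitian structure, i.e.~$\|P_{L_\mu^\perp}v\|$), yields the inequality $\|L_n v\|/\|v\|\geq \delta([v],[L_\mu])\,e^{n(\lambda_1(\mu)-\epsilon)}$ with the required probability. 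For $v\in L_\mu$ the claim is trivial since $\delta([v],[L_\mu])=0$.

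The lower bound in (\ref{y2}) then reduces to (\ref{y1}): since $L_\mu\subsetneq V$ by Furstenberg--Kifer/Hennion, fix any $v_0\notin L_\mu$; the constant $c:=\delta([v_0],[L_\mu])$ is strictly positive and independent of $n$, so $\|L_n\|\geq \|L_n v_0\|/\|v_0\|\geq c\,e^{n(\lambda_1(\mu)-\epsilon)}\geq e^{n(\lambda_1(\mu)-2\epsilon)}$ for all sufficiently large $n$, with probability $\geq 1-D_n$. Relabelling $\epsilon$ finishes the proof; intersecting the four events gives the combined probability bound in the statement. The principal obstacle is the reduction step on the lower side: one must carefully verify, using the structural theory of Furstenberg--Kifer/Hennion, that $\tilde\sigma$ on $\p(V/L_\mu)$ has a \emph{unique} cocycle average equal to $\lambda_1(\mu)$; once this is in hand, the geometric content of the bound translates cleanly through $\pi$ because the Hermitian quotient norm realises both the distance to $L_\mu$ and the equivariant action of $\pi_\ast\mu$.
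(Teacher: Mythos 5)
Your proof is correct and follows essentially the same route as the paper: apply Theorem \ref{BQlarge} to the norm cocycle on $\textrm{P}(V)$ for the upper bound, and pass to the quotient $V/L_\mu$ (using that $\pi_\ast\mu$ has $L_{\pi_\ast\mu}=\{0\}$ and hence a unique cocycle average equal to $\lambda_1(\mu)$) together with the identity $\delta([v],[L_\mu])=\|\bar v\|/\|v\|$ to get the lower bound in \eqref{y1}. The only cosmetic difference is in how you derive \eqref{y2} from \eqref{y1} (fixing a single $v_0\notin L_\mu$ for the lower bound, rather than the paper's uniform use of an orthonormal basis), which is equally valid.
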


This proposition is precisely \cite[Proposition 4.1]{BQ.CLT} except for \eqref{y1} where the authors assume $G_{\mu}$   to be irreducible (in particular,  $L_{\mu}=\{0\}$). However, their proof gives, without further substantial difficulty, the slightly more precise estimate \eqref{y1}. For the convenience of the reader, we include a proof below. 

\begin{proof} It is enough to prove \eqref{y1} since \eqref{y2} follows from \eqref{y1} by applying it to each vector of a fixed orthonormal basis of $V$. 
Let $L=L_{\mu}$ and $G=G_{\mu}$ for simplicity of notation. Also, let $\sigma: G \times \PV \longrightarrow \R, (g, [v])\longmapsto \log \frac{||g v||}{||v||}$. It is readily seen that $\sigma$ is an additive cocycle.  By \eqref{double},  its upper cocycle average is   $\sigma_{\mu}^+=\lambda_1(\mu)$. Thus, by Theorem  \ref{BQlarge}, it is enough to prove  that for every $\epsilon>0$,  there exists a summable sequence $(D_n)_n$ such that for every $v\in V\setminus L$ and  $n
\in \N$,  
$$\p \left( \delta([v], [L_{\mu}])\, e^{n(\lambda_1(\mu) - \epsilon)} \leq \frac{||L_n v||}{||v||}\right) >1-D_n.$$

For $v\in V\setminus L$, denote by $\overline{v}$ its projection onto the quotient $V/L$. Endow $V/L$ with the norm $||v+L||:=\inf\{||v+w||; w\in L\}$. Denote by $\pi: G\longrightarrow \GL(V/L)$ the canonical projection. Then, for every $g\in G$ and\ $v\in V \setminus L$, $$\delta([v], [L])=\frac{||\overline{v}||}{||v||},$$
 and 
\begin{equation}\label{eq.norms}
\frac{||g v||}{||v||}\geq \frac{||\overline{gv}||}{||v||}=  \delta([v], [L])  \frac{||\pi(g)\overline{v}||}{||\overline{v}||}.
\end{equation}
Moreover, the probability measure $\pi_\ast \mu$ on $V/L$ has unique cocycle average $\sigma_\mu^+=\sigma_\mu^-=\lambda_1(\pi_\ast \mu)=\lambda_1(\mu)$. 
Applying again Theorem \ref{BQlarge}, we obtain the desired estimate. 
\end{proof}

Now, we  introduce the tools needed for the weak law of large numbers. First, we recall that the other Lyapunov exponents $\lambda_k(\mu)$ for $k=1,\ldots,d$ alluded to in the introduction are defined  by the following almost sure limit 
\begin{equation}\label{alllyap}
\frac{1}{n}  \log  a_k(L_n)   \underset{n \to \infty}{\longrightarrow}  \lambda_k(\mu),
\end{equation}
which exists in view of Furstenberg-Kesten's theorem.

The following lemma is of independent interest and  generalizes   \cite[Corollary 3.4]{bougerol} where the result is proved under a strong irreducibility assumption. The proof combines Oseledets' theorem and Furstenberg--Kifer and Hennion's results. For a related result, see \cite[Proposition 2.3]{prohaska-sert}.
\begin{lemma}\label{lemma.ps}
Let $\mu$ be a probability measure on $\GL(V)$ with finite first moment. For any sequence $([v_n])_{n\in \N}$ in $\PV$ that converges to some $[v]\in \PV\setminus [L_{\mu}]$, we have
\begin{equation}\label{ppp}\frac{1}{n}  \log \frac{ ||L_n v_n||}{||v_n||}\underset{n\rightarrow +\infty}{\overset{\text{a.s.}}{\longrightarrow}} \lambda_1(\mu). \end{equation}
\end{lemma}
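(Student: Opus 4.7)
The plan is to combine Oseledets' multiplicative ergodic theorem with Furstenberg--Kifer--Hennion's description of $L_\mu$; the movement of $v_n$ will then be a continuity issue because the Oseledets decomposition depends on $\omega$ but not on $n$.

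I begin by reducing to the case $L_\mu=\{0\}$. Since $L_\mu$ is $G_\mu$-invariant, $L_n$ descends to a walk $\bar L_n$ on $V/L_\mu$, whose law $\pi_\ast\mu$ satisfies $L_{\pi_\ast\mu}=\{0\}$ and $\lambda_1(\pi_\ast\mu)=\lambda_1(\mu)$, as recalled before Theorem \ref{BQlarge}. Equipping $V/L_\mu$ with the quotient norm and normalising $\|v_n\|=\|v\|=1$, continuity of $\pi$ gives $\pi(v_n)\to\pi(v)\neq 0$ and $\|\pi(v_n)\|\geq \|\pi(v)\|/2$ for $n$ large. Since $\|L_n v_n\|\geq \|\bar L_n\pi(v_n)\|$, the lower bound for $\frac{1}{n}\log(\|L_n v_n\|/\|v_n\|)$ is implied by the corresponding statement for $\bar L_n$, so we may assume $L_\mu=\{0\}$. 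The upper bound $\limsup \frac{1}{n}\log(\|L_n v_n\|/\|v_n\|)\leq \lambda_1(\mu)$ a.s.\ is immediate from $\|L_n v_n\|\leq \|L_n\|\,\|v_n\|$ and Furstenberg--Kesten.

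For the lower bound, I apply Oseledets' MET to the invertible $\GL(V)$-valued i.i.d.\ cocycle: via the two-sided natural extension, this yields almost surely a measurable cocycle-equivariant splitting $V=E^+(\omega)\oplus V^{<}(\omega)$, with $E^+(\omega)$ the Oseledets subspace of the top exponent $\lambda_1(\mu)$ and $V^{<}(\omega)$ the direct sum of the others, whose top exponent I denote $\beta<\lambda_1(\mu)$ (if $V^{<}(\omega)=\{0\}$ the argument below is immediate). Fix $\epsilon\in(0,(\lambda_1(\mu)-\beta)/3)$; Oseledets regularity then provides an $N(\omega,\epsilon)\in\N$ such that for every $n\geq N$,
\begin{equation*}
\|L_n u\|\geq e^{n(\lambda_1(\mu)-\epsilon)}\|u\|\ \ \forall u\in E^+(\omega),\qquad \|L_n w\|\leq e^{n(\beta+\epsilon)}\|w\|\ \ \forall w\in V^{<}(\omega).
\end{equation*}
Since $L_\mu=\{0\}$, Furstenberg--Kifer--Hennion applied to the fixed $v\neq 0$ yields $\frac{1}{n}\log\|L_n v\|\to\lambda_1(\mu)$ a.s., which forces $v\notin V^{<}(\omega)$ a.s. Decomposing $v_n=v_n^++v_n^-$ and $v=v^++v^-$ along the splitting, the projection $\pi_+$ onto $E^+(\omega)$ parallel to $V^{<}(\omega)$ is a continuous linear map for each fixed $\omega$, so $v_n^+\to v^+\neq 0$, giving $\|v_n^+\|\geq \|v^+\|/2$ for $n$ large and $\|v_n^-\|$ uniformly bounded. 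Combining the triangle inequality with the Oseledets estimates above,
\begin{equation*}
\|L_n v_n\|\geq \|L_n v_n^+\|-\|L_n v_n^-\|\geq \tfrac12\|v^+\|e^{n(\lambda_1(\mu)-\epsilon)}-\|v_n^-\|e^{n(\beta+\epsilon)}\geq \tfrac14\|v^+\|e^{n(\lambda_1(\mu)-\epsilon)}
\end{equation*}
for $n$ large, hence $\liminf \frac{1}{n}\log(\|L_n v_n\|/\|v_n\|)\geq \lambda_1(\mu)-\epsilon$ a.s.; letting $\epsilon\to 0$ concludes.

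The hardest part is producing the Oseledets splitting $V=E^+(\omega)\oplus V^{<}(\omega)$ together with its uniform singular value bounds, rather than just the one-sided Oseledets filtration: this requires invertibility and the two-sided natural extension. Once the splitting is available, the moving sequence $v_n$ creates no real difficulty, since $E^+(\omega)$ and $V^{<}(\omega)$ are $n$-independent and $\pi_+$ is a continuous linear map for each fixed $\omega$.
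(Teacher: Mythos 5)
Your proof is correct and reaches the conclusion via a closely related but technically distinct route. The paper also combines Oseledets with Furstenberg--Kifer--Hennion and also handles the moving sequence $v_n$ by splitting into a fast part and a slow part, but the splitting it uses is different: it takes $F_n^< := \langle l_n^{-1}e_{s+1},\dots,l_n^{-1}e_d\rangle$ from the KAK decomposition of $L_n$ and decomposes \emph{orthogonally} $V = F_n^< \oplus (F_n^<)^\perp$, then invokes the one-sided Oseledets statement that $F_n^<$ converges a.s.\ to the slow filtration subspace $F^<(\omega)$. This gives the lower bound $\|L_n v_n\| \geq a_s(n)\|v_n^+\| - a_{s+1}(n)$ directly from the singular values, and the moving $v_n$ is handled because $\delta([v_n],[F_n^<])$ stays bounded away from $0$ once $v\notin F^<(\omega)$. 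You instead pass to the two-sided natural extension to obtain the Oseledets splitting $V=E^+(\omega)\oplus V^<(\omega)$ with uniform singular-value bounds on each factor, and control $v_n^\pm$ via continuity of the (oblique, fixed-$\omega$) projection $\pi_+$. Both are valid; the paper's choice only needs the one-sided MET and the orthogonality makes $\|v_n^+\|=\delta([v_n],F_n^<)$ immediate, at the cost of tracking $n$-dependent subspaces, whereas your choice keeps the splitting $n$-independent at the cost of invoking the two-sided extension and the uniform lower bound on $E^+$. Two minor remarks: the reduction to $L_\mu=\{0\}$ at the start is harmless but unnecessary, since Furstenberg--Kifer--Hennion already gives $\frac1n\log\|L_n v\|\to\lambda_1(\mu)$ a.s.\ for any $v\notin L_\mu$, which directly forces $v\notin V^<(\omega)$ a.s.; and since the event $\{\liminf_n \frac1n\log\|L_n v_n\| \geq \lambda_1(\mu)-\epsilon\}$ is measurable with respect to the one-sided $\sigma$-algebra, the full-measure statement established on the two-sided extension does indeed descend, as you implicitly use.
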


\begin{proof}
Without loss of generality, we take $||v_n||=1$. In case $\lambda_1(\mu)=\cdots = \lambda_d(\mu)$, since for every $v \in V$ with $||v||=1$, we have $||L_n v|| \geq a_d(L_n)$, it follows by \eqref{alllyap} that almost surely $\liminf_n\frac{1}{n} \log ||L_n v_n||\geq \lambda_d(\mu)=\lambda_1(\mu)$, showing \eqref{ppp}. So suppose this is not the case and let $s \in \{1,\cdots, d-1\}$ be minimal such that $\lambda_{s}(\mu)>\lambda_{s+1}(\mu)$.
 Write $L_n=k_n a(n) l_n$ in the KAK decomposition where we denote $a(n)=:\diag \left(a_{1}(n), \cdots, a_{d}(n)\right)$. Let 
$F_n^{<}$ be the subspace of $V$ given by: $$F_n^<:=\langle l_n^{-1} e_{s+1}, \cdots, l_n^{-1} e_d \rangle.$$
By Oseledets' theorem \cite{oseledets}, $F_n^{<}$ converges almost surely to a random $(d-s)$-dimensional subspace $F^<$ of $V$. For $\p$-almost every $\omega\in \Omega$, $F^<(\omega)$ satisfies: 
$$ \lim_n \frac{1}{n} \log ||L_n(\omega) u||=\lambda_1(\mu) \underset{n\to +\infty}{\Longleftrightarrow} u\not\in F^<(\omega).$$
But since $v\not\in L_{\mu}$, we deduce by Furstenberg--Kifer and Hennion  that  $\frac{1}{n} \log ||L_n v||$ converges almost surely to  $\lambda_1(\mu)$. Hence, almost surely, $v\not\in F^{<}$. Up to choosing a $\p$-full measure subset of $\Omega$, we can suppose that for every $\omega \in \Omega$, $v\not\in F^{<}(\omega)$. Now fix $\omega \in \Omega$. Since   $v_n\longrightarrow v$, there exist 
$N_0=N_0(\omega)\in \N$ and $\epsilon_0=\epsilon_0(\omega)>0$ such that for  every  $n\geq N_0$,  
$$\delta\left([v_n], F_n^<(\omega)\right)\geq \epsilon_0.$$
 Let  $n\geq N_0$ and decompose     $v_n$ in $V=F_n^<(\omega) \oplus {F_n^<}^{\perp}(\omega)$. Writing 
$v_n={v_n^<}+v_n^+$ we have     $||v_n^+||\geq \epsilon_0$ and 
we obtain 
\begin{eqnarray} 
||L_n(\omega) v_n||&=& ||L_n(\omega) v_n^+ + L_n(\omega) v_n^<||\nonumber\\
& \geq & ||L_n(\omega) v_n^+|| - ||L_n(\omega) v_n^<||\nonumber\\
&\geq &  a_s(n)\epsilon_0  - a_{s+1}(n).\nonumber
\end{eqnarray}
By the defining property of $s \in \mathbb{N}$, we deduce by \eqref{alllyap} that 
$$\underset{n\to +\infty}{\liminf} \frac{1}{n}\log||L_n(\omega) v_n||\geq \lambda_1(\mu).$$
This   finishes the proof. 
\end{proof}

We deduce a qualitative version of Proposition \ref{largedeviations1} under a finite first moment assumption:

\begin{proposition}\label{large.deviations.2}
Assume that  $\mu$ has finite first moment. Then for every $\epsilon>0$, 

\begin{equation}\label{p1}\p \left( \delta([v], [L_{\mu}])\, e^{n(\lambda_1(\mu) - \epsilon)} \leq \frac{||L_n v||}{||v||} \leq e^{n(\lambda_1(\mu) + \epsilon)}\right) \underset{n\to+\infty}{\longrightarrow} 1,\end{equation}
uniformly in $[v] \in \PV$, and 
\begin{equation}\label{p2}\p \left(e^{n(\lambda_1(\mu) - \epsilon)} \leq  ||L_n|| \leq e^{n(\lambda_1(\mu) + \epsilon)}\right) \underset{n\to+\infty}{\longrightarrow} 1.\end{equation}
\end{proposition}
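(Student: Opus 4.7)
The upper bound in \eqref{p2} is immediate from Furstenberg--Kesten's theorem, which gives $\frac{1}{n}\log ||L_n|| \to \lambda_1(\mu)$ almost surely (hence in probability); the lower bound in \eqref{p2} is the same. The upper bound in \eqref{p1} then follows at once, uniformly in $[v]$, by combining \eqref{p2} with the trivial inequality $||L_n v||/||v|| \leq ||L_n||$.

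The real content is therefore the lower bound in \eqref{p1}. My first move is to reduce to the case $L_{\mu} = \{0\}$ via the quotient inequality already exploited in the proof of Proposition \ref{largedeviations1}: for $v \in V \setminus L_\mu$,
$$\frac{||L_n v||}{||v||} \geq \delta([v], [L_\mu]) \cdot \frac{||\pi(L_n)\overline{v}||}{||\overline{v}||},$$
where $\pi \colon G_\mu \to \GL(V/L_\mu)$ is the canonical projection and $\overline{v}$ is the image of $v$ in $V/L_\mu$ equipped with the quotient norm; recall also that $\lambda_1(\pi_\ast \mu) = \lambda_1(\mu)$ and $L_{\pi_\ast \mu} = \{0\}$. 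Since the bound in \eqref{p1} is trivially satisfied when $[v] \in [L_\mu]$ (the right-hand side vanishes), matters reduce to proving that $\frac{1}{n}\log(||\pi(L_n) w||/||w||) \to \lambda_1(\mu)$ in probability, uniformly in $[w] \in \textrm{P}(V/L_\mu)$.

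For this I would use a standard subsequence-plus-compactness argument. Suppose uniform convergence failed; then there would exist $\epsilon, \delta_0 > 0$, a subsequence $n_k \to \infty$, and points $[w_{n_k}] \in \textrm{P}(V/L_\mu)$ for which the bad event $\{||\pi(L_{n_k}) w_{n_k}||/||w_{n_k}|| < e^{n_k(\lambda_1(\mu) - \epsilon)}\}$ has probability at least $\delta_0$. By compactness of $\textrm{P}(V/L_\mu)$, I pass to a further subsequence (kept the same notation) along which $[w_{n_k}] \to [w_0]$, and extend to a full deterministic sequence $([w_n])_{n \in \N}$ by setting $[w_n] = [w_0]$ for indices outside this subsequence, so that $[w_n] \to [w_0]$. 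Since $L_{\pi_\ast \mu} = \{0\}$, Lemma \ref{lemma.ps} applied to $\pi_\ast \mu$ gives $\frac{1}{n}\log(||\pi(L_n) w_n||/||w_n||) \to \lambda_1(\mu)$ almost surely, hence in probability---which, read along the subsequence $n_k$, contradicts the lower bound on the bad probabilities.

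The main delicacy lies in the quotient reduction: the factor $\delta([v], [L_\mu])$ on the right-hand side of \eqref{p1} must be exactly the loss incurred when projecting to $V/L_\mu$, and this matches precisely the inequality borrowed from the proof of Proposition \ref{largedeviations1}. Everything else is soft---no quantitative large deviation estimate is needed, which is why finite first moment is enough.
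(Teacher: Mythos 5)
Your proposal is correct and follows essentially the same route as the paper's own proof: Furstenberg--Kesten for \eqref{p2}, the upper bound in \eqref{p1} from \eqref{p2}, the quotient reduction via \eqref{eq.norms} to $V/L_\mu$ where $L_{\pi_\ast\mu}=\{0\}$ and $\lambda_1(\pi_\ast\mu)=\lambda_1(\mu)$, and a compactness--subsequence argument to invoke Lemma~\ref{lemma.ps}. (One cosmetic slip: you say the ``right-hand side'' vanishes when $[v]\in[L_\mu]$, but it is the lower-bound term $\delta([v],[L_\mu])e^{n(\lambda_1(\mu)-\epsilon)}$ that vanishes; the idea is the same.)
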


\begin{proof}
Convergence \eqref{p2} follows immediately from Furstenberg--Kesten's theorem. We prove \eqref{p1}. Let $\epsilon>0$. 
Using \eqref{p2}, it is enough to show that 
$\p \left( \delta([v], [L_{\mu}])\, e^{n(\lambda_1(\mu) - \epsilon)} \leq \frac{||L_n v||}{||v||}\right) \underset{n\to+\infty}{\longrightarrow} 1$ uniformly 
in $v\in V\setminus L_\mu$. Moreover, by compactness of $P(V/L_{\mu})$,  it is enough to show that for every sequence $(v_n)_{n\in \N}$ in $V\setminus L_{\mu}$ such that $([\overline{v}_n])_{n\in \N}$   converges in $P(V/L_{\mu})$, we have
\begin{equation}\label{i8}\p \left( \delta([v_n], [L_{\mu}])\, e^{n(\lambda_1(\mu) - \epsilon)} \leq \frac{||L_n v_n||}{||v_n||} \right) \underset{n\to+\infty}{\longrightarrow} 1.
\end{equation}
Recall that by  \eqref{eq.norms} for every $v \in V \setminus \{0\}$ we have
\begin{equation}\label{eqq}\frac{||L_n v||}{||v||}\geq \frac{||\pi(L_n) \overline{v}||}{||\overline{v}||} \delta([v], [L_{\mu}]),
\end{equation}
where $\pi: G_{\mu} \longrightarrow \GL(V/L_{\mu})$ is the canonical projection. 
Now, since $L_{\pi_\ast \mu}=\{0\}$ and $\lambda_1(\pi_\ast \mu)=\lambda_1(\mu)$, the convergence \eqref{i8} follows directly from Lemma \ref{lemma.ps}. 
\end{proof}

\section{Estimates when $\lambda_1(\mu)>\lambda_2(\mu)$}
In this section, we give estimates for the random matrix products when $\lambda_1(\mu)>\lambda_2(\mu)$. Our main goal is Proposition \ref{prop.distance} which shows that with high probability the attractive point $x_{L_n}^+$ of the random walk $L_n$ does not stay exponentially close to the repelling hyperplane $H_{L_n}^<$. 
Proposition \ref{prop.distance} treats the $\textrm{L}^2$ case, while its qualitative version, Proposition \ref{prop.distance.1}, treats the $\textrm{L}^1$ case. 
\bigskip

Given a probability measure $\mu$ on $\GL_d(\C)$, we denote by $\check{\mu}$ the pushforward of $\mu$ by the map $g\mapsto g^\ast$, where $g^\ast$ is the conjugate transpose matrix of $g$.   

\begin{proposition}\label{prop.distance}
 Assume that $\mu$ has finite second moment and that  $\lambda_1(\mu)>\lambda_2(\mu)$. 
Then,   for any $\epsilon>0$, there exists  a sequence  $(D_n)_{n\in \N}$   with $\sum_n{D_n}<+\infty$ 
   such that for every $n\in \N$,
     $$\p\left(\delta(x_{L_n}^+,H_{L_n}^<)\leq e^{-\epsilon n}\right)<  D_n.$$
\end{proposition}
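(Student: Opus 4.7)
The plan is to perform a two-halves decomposition $L_n = MP$ with $m = \lfloor n/2 \rfloor$, $M = X_n \cdots X_{m+1}$, $P = X_m \cdots X_1$ (so that $M \sim \mu^{\ast (n-m)}$ and $P \sim \mu^{\ast m}$ are independent), and to combine several applications of Proposition \ref{largedeviations1} (to $\mu$, to $\wedge^2 \mu$, and to the transposed measure $\check\mu$) with the standard SVD-stability estimates for products of matrices with spectral gap (of the type used in Benoist--Quint's book). The hypothesis $\lambda_1(\mu) > \lambda_2(\mu)$ combined with Proposition \ref{largedeviations1} applied to $\wedge^2 \mu$ (which inherits the finite second moment) gives $a_2(M)/a_1(M)$ and $a_2(P)/a_1(P)$ bounded by $e^{-\eta n}$ for some fixed $\eta > 0$, with summable-tail probability. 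The stability lemma for the KAK decomposition then controls $\delta(x_{L_n}^+, x_M^+)$ and $\delta(H_{L_n}^<, H_P^<)$ by $a_2/a_1$ divided by quantities like $\delta(x_P^+, H_M^<)$; a symmetric version of the argument below, with the roles of $M$ and $P$ swapped, handles these denominators, and the problem reduces to lower-bounding $\delta(x_M^+, H_P^<)$ by $e^{-\epsilon n}$ with summable-tail probability.

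Conditioning on $P$, set $w_P := l_P^\ast e_1$, the unit normal to $H_P^<$; note $w_P = x_{P^\ast}^+$ via the identity $H_g^< = (\mathbb{C}\,x_{g^\ast}^+)^\perp$, and $\delta(x_M^+, H_P^<) = |\langle k_M e_1, w_P \rangle|$. A direct SVD computation for $M$ (expanding $\|M^\ast w_P\|^2 = \sum_i a_i(M)^2 |\langle w_P, k_M e_i\rangle|^2$ and using $\|w_P\|=1$) gives the key inequality
$$a_1(M)^2 \,|\langle k_M e_1, w_P\rangle|^2 \;\geq\; \|M^\ast w_P\|^2 - a_2(M)^2.$$
Since $M^\ast \sim \check\mu^{\ast (n-m)}$ is independent of $P$, for each realization of $P$ one may apply \eqref{y1} of Proposition \ref{largedeviations1} to the $\check\mu$-walk with the test vector $w_P$: with summable-tail probability in $M$, $\|M^\ast w_P\| \geq e^{(n-m)(\lambda_1(\mu) - \epsilon)}\,\delta([w_P], [L_{\check\mu}])$. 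Combined with the upper bounds $a_1(M) \leq e^{(n-m)(\lambda_1(\mu) + \epsilon)}$ and $a_2(M) \leq e^{(n-m)(\lambda_2(\mu) + \epsilon)}$, one obtains, for $\epsilon$ small compared to $\lambda_1(\mu) - \lambda_2(\mu)$,
$$|\langle k_M e_1, w_P\rangle| \;\geq\; \tfrac{1}{\sqrt{2}}\, e^{-4\epsilon n}\,\delta([w_P], [L_{\check\mu}]).$$

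The remaining task is to show $\delta(x_{P^\ast}^+, [L_{\check\mu}])$ is not exponentially small, with summable-tail probability in $m$. Here I would again use attractor-approximation: fix a unit $u_0 \notin L_{\check\mu}$. A symmetric SVD estimate, applied to $P^\ast$ with $\check\mu$ and $\wedge^2 \check\mu$ playing the role of $\mu$ and $\wedge^2\mu$, gives $\delta(x_{P^\ast}^+, [P^\ast u_0])$ exponentially small with summable tail. On the other hand, $\delta([P^\ast u_0], [L_{\check\mu}])$ is bounded below by $e^{-2\epsilon m}\,\delta([u_0], [L_{\check\mu}])$, with summable-tail probability, by passing to the quotient $V/L_{\check\mu}$ on which $\pi_\ast\check\mu$ satisfies $L_{\pi_\ast\check\mu} = \{0\}$ (Section \ref{sub.RMP}) and applying Proposition \ref{largedeviations1} once more. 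Putting everything together and choosing the internal $\epsilon$ small enough relative to the target $\epsilon$ of the proposition and to $\lambda_1(\mu) - \lambda_2(\mu)$ yields the desired conclusion. The main obstacle, and where the argument genuinely improves on Guivarc'h and Benoist--Quint's strongly-irreducible case, lies in this last step: proving that the attractor of the $\check\mu$-walk cannot concentrate exponentially near the proper invariant subspace $[L_{\check\mu}]$, for which the reduction to the quotient $V/L_{\check\mu}$, with its trivial bad subspace, is essential.
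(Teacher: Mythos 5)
Your proposal is correct and follows essentially the same strategy as the paper's proof: the two-halves decomposition to gain independence, the observation that $H_{L_n}^<$ is governed by the transpose walk $L_n^\ast$ whose increments are $\check\mu$-distributed, the inequality lower-bounding $\delta(x_M^+,H_P^<)$ by $\|M^\ast w_P\|/\|M^\ast\|$ minus $a_2(M)/a_1(M)$ (which is the BFLM estimate \eqref{geometry1} used in Lemma~\ref{lemlem4}), and, crucially, the reduction to showing that $\delta(x_{P^\ast}^+,[L_{\check\mu}])$ cannot be exponentially small via the quotient $V/L_{\check\mu}$ where $L_{\pi_\ast\check\mu}=\{0\}$ (which is precisely Lemma~\ref{while} applied to $\check\mu$). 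The only organizational difference is that the paper packages the KAK-stability step through the attractor-approximation Lemmas~\ref{lemlem1}--\ref{lemlem2} for the right walk $R_n$ and then transfers to $L_n$ via the distributional identity $(X_1,\dots,X_n)\overset{d}{=}(X_n,\dots,X_1)$, whereas you invoke SVD-stability estimates directly; both routes land in the same place.
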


The proof of  Proposition \ref{prop.distance} will be done through a series of lemmas.  
\begin{lemma}
\label{lemlem1}
Under the assumptions of Proposition \ref{prop.distance}, there exists a constant $C>0$ such that for every $x\in \PV \setminus [L_\mu]$ there exists a summable sequence $(D_n)_n$ of positive reals such that for every $n\in \N$, 
\begin{equation}\p\left( \delta(x_{L_n}^+, L_n \cdot x)\geq  e^{-C n} \right)<  D_n. \nonumber\end{equation}
\end{lemma}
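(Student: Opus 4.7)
The plan is to use the gap $\lambda_1(\mu) > \lambda_2(\mu)$ together with Proposition~\ref{largedeviations1} applied to both $\mu$ and $\wedge^2 \mu$. Concretely, I aim to bound $\delta(L_n \cdot x, x_{L_n}^+)$ above by $a_2(L_n)/\|L_n v\|$, control the numerator at exponential rate $\lambda_2(\mu)$ and the denominator at rate $\lambda_1(\mu)$ with summable exceptional probabilities, and then absorb the $x$-dependent prefactor into the threshold $n_0(x)$.

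First, I would establish the geometric inequality
$$
\delta(L_n \cdot x, x_{L_n}^+) \;\leq\; \frac{a_2(L_n)}{\|L_n v\|}, \qquad x=[v],\; \|v\|=1.
$$
Writing $L_n = k_n a(n) l_n$ in the KAK decomposition and $l_n v = \sum_{i=1}^d \alpha_i e_i$ with $\sum |\alpha_i|^2 = 1$, the top component collapses in the wedge: $L_n v \wedge k_n e_1 = \sum_{i\geq 2} \alpha_i a_i(n)\, k_n e_i \wedge k_n e_1$. Since $k_n$ is unitary, the family $(k_n e_i \wedge k_n e_1)_{i\geq 2}$ is orthonormal in $\wedge^2 V$, so $\|L_n v \wedge k_n e_1\|^2 = \sum_{i\geq 2}|\alpha_i|^2 a_i(n)^2 \leq a_2(n)^2$. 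Inserting this into the Fubini--Study formula yields the displayed bound.

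Next, I would feed in the large deviation estimates. Eq.~\eqref{y1} gives a summable sequence $(D_n^{(1)})_n$ with $\|L_n v\| \geq \delta([v],[L_\mu])\,e^{n(\lambda_1(\mu)-\epsilon)}$ off an event of mass $D_n^{(1)}$. For the numerator, note that $\wedge^2 \mu$ still has finite second moment — indeed $\log N(\wedge^2 g) \leq 2\log N(g)$ — and $\lambda_1(\wedge^2 \mu) = \lambda_1(\mu) + \lambda_2(\mu)$. Applying \eqref{y2} to $\mu$ and to $\wedge^2 \mu$ produces summable sequences $(D_n^{(2)})_n, (D_n^{(3)})_n$ such that, off their union, $\|L_n\| \geq e^{n(\lambda_1(\mu)-\epsilon)}$ and $\|\wedge^2 L_n\| \leq e^{n(\lambda_1(\mu)+\lambda_2(\mu)+\epsilon)}$, whence
$$
a_2(L_n) \;=\; \frac{\|\wedge^2 L_n\|}{\|L_n\|} \;\leq\; e^{n(\lambda_2(\mu)+2\epsilon)}.
$$

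Combining, off an event of summable probability one obtains
$$
\delta(L_n \cdot x, x_{L_n}^+) \;\leq\; \frac{1}{\delta([v],[L_\mu])}\, e^{-n(\lambda_1(\mu)-\lambda_2(\mu)-3\epsilon)}.
$$
I would fix any $C$ with $0 < C < \lambda_1(\mu) - \lambda_2(\mu)$ (independent of $x$), pick $\epsilon$ so that $C' := \lambda_1(\mu)-\lambda_2(\mu)-3\epsilon - C > 0$, and set $n_0(x) := \lceil -\log \delta([v],[L_\mu])/C' \rceil$. For $n \geq n_0(x)$ the right-hand side is at most $e^{-Cn}$; defining $D_n := D_n^{(1)}+D_n^{(2)}+D_n^{(3)}$ for $n \geq n_0(x)$ and $D_n := 1$ for $n < n_0(x)$ produces a summable sequence, as required (the $n_0$-dependence on $x$ is permitted, the constant $C$ is not). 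The only subtle step is transferring Proposition~\ref{largedeviations1} to $\wedge^2 \mu$, which is harmless; otherwise the argument is essentially bookkeeping of the inputs already established in the preliminaries.
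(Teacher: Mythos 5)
Your proof is correct and follows essentially the same route as the paper: the key geometric inequality $\delta(L_n\cdot x, x_{L_n}^+)\leq a_2(L_n)\|v\|/\|L_n v\|$ is exactly the paper's \eqref{eq.marre} (since $a_1(g)=\|g\|$), and both proofs then feed in Proposition~\ref{largedeviations1} applied to $\|L_n\|$, $\|\wedge^2 L_n\|$ and $\|L_n v\|$ and absorb the $x$-dependent factor $\delta([v],[L_\mu])^{-1}$ into the summable sequence. You are merely more explicit than the paper in deriving the geometric bound from scratch and in spelling out how the threshold $n_0(x)$ makes $(D_n)_n$ depend on $x$ while keeping $C$ uniform.
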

\begin{proof} Let   $x=[v]\in \textrm{P}(V)\setminus [L_{\mu}]$. Given    $g\in \textrm{GL}(V)$, one can easily show that
\begin{equation}\label{eq.marre}
\delta(x_{g}^+, g \cdot x)= \delta(e_1, a_g l_g \cdot x)\leq  \frac{a_2(g)}{a_1(g)}  \frac{||g||\,||v||}{||g v||}.
\end{equation}
 Applying Proposition \ref{largedeviations1} to $||L_n||$ and $||\bigwedge^2 L_n||$ gives some summable sequence $(D'_n)_n$ such  that for every $n$, 
\begin{equation}\label{nulaij}\p\left( \frac{a_2(L_n)}{a_1(L_n)} \geq e^{-n(\lambda_1-\lambda_2)/2}\right)< D'_n.
\end{equation}
Let $\epsilon:=(\lambda_1-\lambda_2)/4>0$. Since $v\not\in L_{\mu}$, applying again Proposition \ref{largedeviations1} to $L_n$ and $L_n v$, we get  a summable sequence $(D''_n)_n$ such that for every $n$, $$\p\left(\frac{||L_n||\,||v||}{||L_n v||} \geq e^{\epsilon n}\right)<D''_n.$$
Now using \eqref{eq.marre}, the desired estimate follows by taking $C:=(\lambda_1-\lambda_2)/4$ and $D_n:=D'_n+D''_n$.\end{proof}

We deduce the following estimate from the previous lemma and Proposition \ref{largedeviations1}. It says that the attracting directions of right random products stabilize exponentially fast with high probability. It is reminiscent of the estimates appearing in the usual proofs of Oseledets' theorem (see e.g.\ \cite[\S 1]{ruelle}). 

\begin{lemma}\label{lemlem2}
Under the assumptions of Proposition \ref{prop.distance}, there exist a constant $C>0$, a summable sequence $(D_n)_n$ of positive reals such that for every $n\in \N$, 
$$\mathbb{P}\left(\delta(x^{+}_{R_{2n}},x^+_{R_n}) \geq e^{-Cn}\right) <  D_n.$$
\end{lemma}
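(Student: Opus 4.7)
The approach is to exploit the decomposition $R_{2n}=R_n S_n$, where $S_n:=X_{n+1}\cdots X_{2n}$ is independent of $R_n$ and distributed as $\mu^{\ast n}$. I would fix once and for all a point $y_0\in\PV\setminus [L_\mu]$; such a $y_0$ exists because the hypothesis $\lambda_1(\mu)>\lambda_2(\mu)$ forces $L_\mu\subsetneq V$. The key identity is
$$R_{2n}\cdot y_0\;=\;R_n\cdot(S_n\cdot y_0),$$
so that if both $x^+_{R_{2n}}$ and $x^+_{R_n}$ remain exponentially close to this common point, the triangle inequality produces the claim.

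For the first term, since $R_{2n}$ is distributed as $L_{2n}$ and $y_0\notin [L_\mu]$ is deterministic, Lemma \ref{lemlem1} gives $\delta(x^+_{R_{2n}},R_{2n}\cdot y_0)\leq e^{-2Cn}$ outside a set of summable probability. For the second term I would condition on $S_n$ and apply Lemma \ref{lemlem1} to $R_n$, which is independent of $S_n$, at the now-deterministic point $S_n\cdot y_0$. The subtlety is that inspection of the proof of Lemma \ref{lemlem1} shows its estimate actually takes the form $\delta(x^+_{L_n},L_n\cdot x)\leq \delta(x,[L_\mu])^{-1}e^{-cn}$ for some fixed $c>C$, uniformly in $x\notin[L_\mu]$ and outside a set of summable probability (the summable tail in Proposition \ref{largedeviations1} being uniform in $v$). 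Therefore I need a quantitative lower bound of the form $\delta(S_n\cdot y_0,[L_\mu])\geq c_0 e^{-\epsilon n}$ with $\epsilon<c-C$.

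To secure this, I would write $\delta(S_n\cdot y_0,[L_\mu])=\|\overline{S_n y_0}\|/\|S_n y_0\|$, where the bar denotes the projection onto $V/L_\mu$. By Proposition \ref{largedeviations1} applied to $S_n$, the denominator is at most $e^{n(\lambda_1(\mu)+\epsilon/2)}\|y_0\|$ outside a summable-complement event. By the same proposition applied to $\pi_\ast\mu$ on $V/L_\mu$ (where $L_{\pi_\ast\mu}=\{0\}$, so the factor $\delta([\overline{y_0}],[L_{\pi_\ast\mu}])$ is trivially $1$), the numerator $\|\pi(S_n)\overline{y_0}\|$ is at least $e^{n(\lambda_1(\mu)-\epsilon/2)}\|\overline{y_0}\|$ outside a summable-complement event. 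Combining, $\delta(S_n\cdot y_0,[L_\mu])\geq c_0 e^{-\epsilon n}$ outside a summable-complement event, with $c_0=\|\overline{y_0}\|/\|y_0\|>0$.

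Inserting this lower bound into the sharper form of Lemma \ref{lemlem1} (applied to $R_n$ at $x=S_n\cdot y_0$, conditionally on $S_n$) gives $\delta(x^+_{R_n},R_n\cdot(S_n\cdot y_0))\leq c_0^{-1}e^{-(c-\epsilon)n}$ outside a set of summable probability. The triangle inequality then yields $\delta(x^+_{R_{2n}},x^+_{R_n})\leq e^{-C'n}$ with $C'>0$, provided $\epsilon$ is chosen small compared with the gap $\lambda_1(\mu)-\lambda_2(\mu)$. The main technical point is the quantitative tracking of the factor $\delta(v,[L_\mu])^{-1}$ through Lemma \ref{lemlem1}; this is what forces the preliminary estimate on $\delta(S_n\cdot y_0,[L_\mu])$ and crucially relies on the uniform-in-$v$ summability provided by Proposition \ref{largedeviations1}.
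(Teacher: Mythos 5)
The proposal is correct, but it takes a genuinely different route from the paper in the way the two attractive directions are compared.

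The paper fixes $x=[v]\notin[L_\mu]$ and uses a three-term triangle inequality, comparing $x^+_{R_{2n}}$ to $R_{2n}\cdot x$, then $R_{2n}\cdot x$ to $R_n\cdot x$, then $R_n\cdot x$ to $x^+_{R_n}$. The outer two terms are handled by Lemma \ref{lemlem1} at the fixed point $x$, and the middle term $\delta(R_{2n}\cdot x,R_n\cdot x)$ is bounded by a direct exterior-power identity,
\[
\delta(R_{2n}\cdot x,R_n\cdot x)=\frac{\|\textstyle\bigwedge^2 R_n\,(X_{n+1}\cdots X_{2n}v\wedge v)\|}{\|R_{2n}v\|\,\|R_n v\|}\leq\frac{\|\textstyle\bigwedge^2 R_n\|\,\|v\|^2\,\|X_{n+1}\cdots X_{2n}\|}{\|R_{2n}v\|\,\|R_n v\|},
\]
which is then estimated by Proposition \ref{largedeviations1}. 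You instead use a two-term triangle inequality through the single intermediate point $R_{2n}\cdot y_0=R_n\cdot(S_n\cdot y_0)$, which trades the explicit exterior-power computation for a conditional application of Lemma \ref{lemlem1} at the random point $S_n\cdot y_0$. This requires the two additional observations you make, both of which are correct: (a) the proof of Lemma \ref{lemlem1} really gives the uniform-in-$x$ estimate $\delta(x^+_{L_n},L_n\cdot x)\leq\delta(x,[L_\mu])^{-1}e^{-cn}$ outside an event whose probability is summable uniformly in $x$ (since Proposition \ref{largedeviations1} is uniform in $v$, and $C=(\lambda_1-\lambda_2)/4$ can be improved to $c$ roughly $(\lambda_1-\lambda_2)/2$); and (b) the factor $\delta(S_n\cdot y_0,[L_\mu])^{-1}$ grows at most like $e^{\epsilon n}$ with high probability, which you obtain from $\delta(S_n\cdot y_0,[L_\mu])=\|\pi(S_n)\overline{y_0}\|/\|S_n y_0\|$ together with the cocycle-average equality $\lambda_1(\pi_\ast\mu)=\lambda_1(\mu)$ and $L_{\pi_\ast\mu}=\{0\}$. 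The upshot is that your version makes visible the fact that Lemma \ref{lemlem1} has a uniform form and that the only loss in uniformity is the $\delta(\cdot,[L_\mu])^{-1}$ factor; the paper's version sidesteps this by never leaving the fixed point $x$, at the cost of one extra term handled by a bespoke computation. Both are of comparable length and rely on the same ingredients (the gap $\lambda_1>\lambda_2$ and Proposition \ref{largedeviations1}).
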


\begin{remark}
Unlike the other lemmas in this section  as well as Proposition \ref{prop.distance}, the result of Lemma \ref{lemlem2} fails if we replace the right random walk $R_n$ by the left random walk $L_n$.
\end{remark}

\begin{proof}
  Fix $x=[v]\in \PV\setminus [L_{\mu}]$.  Clearly, for every $C>0$ and  $n\in \N$,  
\begin{equation*}\label{part}
\begin{aligned}
& \mathbb{P}\left(\delta(x_{R_{2n}}^+,x_{R_n}^+)\geq e^{-Cn}\right)   \leq    \mathbb{P}\left(\delta(R_{2n}\cdot x,x^+_{R_{2n}})\geq \frac{e^{-Cn}}{3}\right) \\ & +\mathbb{P}\left(\delta(R_{n}\cdot x,x^+_{R_{n}}) \geq \frac{e^{-Cn}}{3}\right) + \mathbb{P}\left(\delta(R_{2n}\cdot x,R_n \cdot x) \geq \frac{e^{-Cn}}{3}\right).
\end{aligned}
\end{equation*}
Since $v\not\in L_{\mu}$,  Lemma \ref{lemlem1}  gives some $C_1>0$ and some summable sequence $(D'_n)_n$ such that both  $\mathbb{P}\left(\delta(R_{n} \cdot x,x^+_{R_{n}})\geq e^{-C_1n}  \right)$ and $\mathbb{P}\left(\delta(R_{2n} \cdot x,x^+_{R_{2n}})\geq e^{-C_1 n}  \right)$
are   $<D'_n$ for any $n$. 
Thus, by \eqref{part}, it suffices to show that there exists  some $C_2>0$ and some summable  sequence $(D''_n)_n$  such that for every $n$, 
\begin{equation}\label{eq1}
\mathbb{P}\left(\delta(R_{2n} \cdot x,R_n \cdot x) \geq \frac{e^{-C_2n}}{3}\right) \leq D''_n.
\end{equation}
Indeed, the desired estimate then follows by taking $C=\min\{C_1,C_2\}/2$ and $D_n=2D'_n+D''_n$. 
To prove \eqref{eq1}, we observe that
\begin{equation*}
\delta(R_{2n} \cdot x, R_{n} \cdot x)=
\frac{||\bigwedge^2 R_n(X_{n+1} \cdots X_{2n} v \wedge v)||}{||R_{2n} v||\,||R_n v||}\leq \frac{||\bigwedge^2 R_n||\, ||v||^2\,||X_{n+1} \cdots X_{2n}||}
{||R_{2n} v||\,||R_n v||}.
\end{equation*}
Now, we apply Proposition \ref{largedeviations1}   to $||\bigwedge^2 R_n||$, $||R_{2n}v||$, $||R_n v||$ and to  $||X_{n+1}\cdots X_{2n}||$ (which has the same distribution as $||R_n||$). Since $\lambda_1(\mu)>\lambda_2(\mu)$, \eqref{eq1} follows along the same lines as in the proof of Lemma \ref{lemlem1}.  
\end{proof}

The following lemma expresses the fact that, with high probability, the attracting directions of random products do not stay exponentially close to the projective subspace of lower expansion.

\begin{lemma}\label{while} Under the assumptions of Proposition \ref{prop.distance}, for every $\epsilon>0$, there exists a summable sequence $(D_n)_{n\in \N}$ such that for every $n\in \N$, 
$$\p\left(\delta(x_{L_n}^+, [L_{\mu}])\leq e^{-\epsilon n}\right)<D_n.$$
\end{lemma}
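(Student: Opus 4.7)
The plan is to reduce the question to the analogous statement for a deterministic orbit $L_n\cdot[v]$ via Lemma \ref{lemlem1}, and then to lower bound $\delta(L_n\cdot[v],[L_\mu])$ by passing to the quotient $V/L_\mu$, exactly as was done inside the proof of Proposition \ref{largedeviations1}.

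First, I would fix once and for all a unit vector $v\in L_\mu^\perp$. This is possible because $L_\mu\subsetneq V$: otherwise the first Lyapunov exponent of the restriction of $\mu$ to $V$ would be strictly smaller than $\lambda_1(\mu)$, contradicting its definition. With this choice, $\|\bar v\|=1$ in the quotient norm on $V/L_\mu$, so that $\delta([v],[L_\mu])=1$.

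Next, given $\epsilon>0$, let $C>0$ be the constant produced by Lemma \ref{lemlem1}, and pick $\epsilon'>0$ with $2\epsilon'<\min\{\epsilon,C\}$. Three applications of Proposition \ref{largedeviations1} each furnish a summable sequence. Applied to $\mu$ with parameter $\epsilon'$, it gives $\|L_n v\|\leq e^{n(\lambda_1(\mu)+\epsilon')}$ off a summable event. Applied to the pushforward $\pi_\ast \mu$ on $\GL(V/L_\mu)$ with parameter $\epsilon'$—using that $L_{\pi_\ast\mu}=\{0\}$ and $\lambda_1(\pi_\ast\mu)=\lambda_1(\mu)$, as recalled in \S\ref{sub.RMP}—it gives $\|\pi(L_n)\bar v\|\geq e^{n(\lambda_1(\mu)-\epsilon')}$ off a summable event. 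Since
$$\delta(L_n\cdot[v],[L_\mu])\;=\;\frac{\|\pi(L_n)\bar v\|}{\|L_n v\|},$$
we conclude that $\delta(L_n\cdot[v],[L_\mu])\geq e^{-2n\epsilon'}$ off a summable event. Finally, Lemma \ref{lemlem1} applied at $x=[v]$ gives $\delta(x_{L_n}^+,L_n\cdot[v])\leq e^{-Cn}$ off a summable event.

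Combining the three estimates via the triangle inequality for the Fubini--Study metric $\delta$,
$$\delta(x_{L_n}^+,[L_\mu])\;\geq\;\delta(L_n\cdot[v],[L_\mu])-\delta(x_{L_n}^+,L_n\cdot[v])\;\geq\; e^{-2n\epsilon'}-e^{-Cn}\;\geq\;\tfrac{1}{2}e^{-2n\epsilon'}\;\geq\;e^{-n\epsilon},$$
where the last two inequalities use respectively $2\epsilon'<C$ and $2\epsilon'<\epsilon$ and hold for $n$ large enough. Summing the three summable sequences (and padding the finitely many initial terms by $1$'s if needed) yields the desired summable bound $D_n$.

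I do not foresee any real obstacle: the only non-mechanical step is the choice of a test vector $v\in L_\mu^\perp$, together with the observation that the quantitative lower bound on $\delta(L_n\cdot[v],[L_\mu])$ is available \emph{uniformly} because $\pi_\ast\mu$ has $L_{\pi_\ast\mu}=\{0\}$, so Proposition \ref{largedeviations1} applies to $\pi_\ast\mu$ for each single vector $\bar v\neq 0$ without a further factor of $\delta(\bar v, \{0\})$.
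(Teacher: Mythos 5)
Your proposal is correct and follows essentially the same route as the paper: fix a reference point $x=[v]\notin[L_\mu]$, use Lemma~\ref{lemlem1} to pass from $x_{L_n}^+$ to $L_n\cdot x$, express $\delta(L_n\cdot x,[L_\mu])$ as the ratio $\|\pi(L_n)\bar v\|/\|L_n v\|$ via \eqref{eq.norms}, control that ratio with Proposition~\ref{largedeviations1} applied to $\pi_\ast\mu$ and $\mu$, and combine through the triangle inequality for $\delta$. The only cosmetic difference is that you choose $v\in L_\mu^\perp$ so that $\delta([v],[L_\mu])=1$, whereas the paper works with an arbitrary $v\notin L_\mu$ and absorbs the nonzero constant $\delta([v],[L_\mu])$ into the exponential estimate.
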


\begin{proof}
Fix $x=[v] \in \PV \setminus [L_\mu]$. 
By Lemma \ref{lemlem1}, there exist a constant $C>0$ and a summable sequence $D'_n$ of positive reals such that for every $n \geq 1$,  
\begin{equation}\label{eq.6}
\mathbb{P}(\delta(x^+_{L_n},L_n \cdot x) \geq e^{-Cn}) <D'_n. 
\end{equation}
Let now $\epsilon>0$. By \eqref{eq.6}, 

\begin{equation}\label{eq.66}\p\left(\delta(x_{L_n}^+, [L_{\mu}])\leq e^{-\epsilon n}\right) < \p\left(\delta(L_n \cdot x, L_{\mu}) \leq   e^{-\epsilon  n} +e^{-C n}\right) +D'_n.\end{equation}
As in the proof of Proposition \ref{largedeviations1}, for every $v\in V\setminus L_{\mu}$, denote by $\overline{v}$ its projection onto the quotient $V/L_\mu$. Endow $V/L_{\mu}$ with the norm $||v+L_{\mu}||:=\inf\{||v+w||; w\in L_{\mu}\}$ and denote by $\pi: G\longrightarrow \GL(V/L_{\mu})$ the canonical projection. By $(\ref{eq.norms})$, we have 
$$
\delta(L_n \cdot x, [L_{\mu}])=\frac{||\pi(L_n)\bar{v}||}{||L_n v||}
.$$
 Recalling that $\pi_\ast \mu$ has a unique cocycle average, an application of Proposition \ref{largedeviations1} to $||\pi(L_n)\bar{v}||$ and $||L_n v||$ implies that  there exists a summable sequence $D''_n(\epsilon)$ of positive reals such that for every $n \geq 1$,  
\begin{equation}\label{eq.7}
\mathbb{P}(\delta(L_n \cdot x, [L_\mu]) \leq   e^{-\epsilon  n} +e^{-C n})\leq D_n''(\epsilon).
\end{equation}
Combining \eqref{eq.66} and  \eqref{eq.7} proves the lemma for  $D_n(\epsilon):=D'_n+D''_n(\epsilon)
$.  
\end{proof}

In the next lemma, we estimate the probability of return of the attractive point of $L_n$ to exponentially small neighborhoods of hyperplanes. 
   \begin{lemma}\label{lemlem4}
Under the assumptions of Proposition \ref{prop.distance},  
  for any $\epsilon>0$, there exists  a summable sequence  $(D_n)_{n\in \N}$ of positive reals   
   such that for every $n\in \N$,
$$\underset{H \in \mathcal{H}_{n,\epsilon}}{\sup}\p\left(\delta(x_{L_n}^+,H)\leq e^{-\epsilon n}\right)<D_n,$$
 where $\mathcal{H}_{n,\epsilon}$ is the set of projective hyperplanes $H=(\C u)^{\perp}$ such that $\delta([u], [L_{\check{\mu}}])>e^{-\epsilon n/2}$. 
 \end{lemma}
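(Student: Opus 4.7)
The strategy is to express $\delta(x^+_{L_n}, H)$ in terms of the action of the adjoint random product $L_n^*$ on the unit normal vector $u$ to $H$, and then control each ingredient by the large-deviation estimates in Proposition \ref{largedeviations1}, applied both to $\mu$ and to $\check{\mu}$.

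First, I would establish the geometric inequality: for any $g \in \GL(V)$ and any hyperplane $H = (\C u)^\perp$ with $||u|| = 1$,
$$
\delta(x^+_g, H)^2 \;\geq\; \frac{||g^* u||^2 - a_2(g)^2}{a_1(g)^2}. \qquad (\ast)
$$
To derive $(\ast)$, I would use the KAK-decomposition of the adjoint $g^* = l_g^* a(g) k_g^*$: setting $w := k_g^* u$, a direct computation gives $|w_1| = |\langle u, k_g e_1\rangle| = \delta(x^+_g, H)$, and unitarity of $l_g^*$ together with $\sum_i |w_i|^2 = 1$ yields $||g^* u||^2 = ||a(g) w||^2 \leq a_1(g)^2 |w_1|^2 + a_2(g)^2(1 - |w_1|^2)$.

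Next, I would fix a small auxiliary parameter $\epsilon_1 > 0$ and apply Proposition \ref{largedeviations1} in three ways, each yielding a summable exceptional sequence: (a) the two-sided bound $e^{n(\lambda_1 - \epsilon_1)} \leq a_1(L_n) \leq e^{n(\lambda_1 + \epsilon_1)}$ from Proposition \ref{largedeviations1} applied to $\mu$; (b) the bound $a_2(L_n) \leq e^{n(\lambda_2 + 2\epsilon_1)}$, obtained by combining the lower bound on $a_1(L_n)$ from (a) with the upper bound on $||\bigwedge^2 L_n||$ from Proposition \ref{largedeviations1} applied to $\bigwedge^2 \mu$; (c) the bound $||L_n^* u|| \geq \delta([u], [L_{\check{\mu}}])\, e^{n(\lambda_1 - \epsilon_1)}$, obtained by applying Proposition \ref{largedeviations1} to $\check{\mu}$ and the vector $u$, noting that $L_n^*$ has the same distribution as the left random walk of law $\check{\mu}$ and that $\lambda_1(\check{\mu}) = \lambda_1(\mu)$. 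For $H \in \mathcal{H}_{n,\epsilon}$, the defining hypothesis $\delta([u], [L_{\check{\mu}}]) > e^{-\epsilon n/2}$ upgrades (c) to $||L_n^* u|| > e^{n(\lambda_1 - \epsilon/2 - \epsilon_1)}$.

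Finally, inserting (a), (b), and the upgraded (c) into $(\ast)$, on the intersection of the three good events (whose complement has summable probability) I would obtain
$$
\delta(x^+_{L_n}, H)^2 \;\geq\; e^{-n(\epsilon + 4\epsilon_1)} \;-\; e^{-2n(\lambda_1 - \lambda_2 - 3\epsilon_1)}.
$$
The main (quantitative) obstacle is the bookkeeping of the small constants: since $\lambda_1(\mu) > \lambda_2(\mu)$, one chooses $\epsilon_1$ small enough relative to $\epsilon$ and the spectral gap $\lambda_1 - \lambda_2$ so that the first term dominates the second and the whole expression exceeds $e^{-2\epsilon n}$, giving the desired lower bound $\delta(x^+_{L_n}, H) \geq e^{-\epsilon n}$. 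This works cleanly in the regime where $\epsilon/2 + O(\epsilon_1) < \lambda_1 - \lambda_2$, which is the range actually needed in the subsequent Proposition \ref{prop.distance}.
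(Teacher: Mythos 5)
Your proof is correct and follows essentially the same route as the paper: relate $\delta(x_{L_n}^+, H)$ to $\|L_n^* u\|$, $a_1(L_n)$ and $a_2(L_n)$ via a singular-value-decomposition inequality, then control each quantity by Proposition~\ref{largedeviations1} applied respectively to $\mu$, to $\bigwedge^2_*\mu$, and to $\check{\mu}$, using the defining condition of $\mathcal{H}_{n,\epsilon}$ to lower-bound $\delta([u],[L_{\check{\mu}}])$. The only cosmetic difference is that you rederive your inequality $(\ast)$ from the KAK decomposition rather than citing the equivalent unsquared estimate $\tfrac{\|g^*u\|}{\|g^*\|\,\|u\|}\leq \delta(x_g^+,(\C u)^\perp)+\tfrac{a_2(g)}{a_1(g)}$ from \cite[Lemma 4.1]{BFLM} as the paper does, and your closing remark that the computation directly yields the bound only for $\epsilon$ small relative to the spectral gap precisely mirrors the paper's own restriction to $0<\epsilon<C$ at the end of its proof.
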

 
\begin{proof}
We start by noting the following elementary inequality (see e.g.\  \cite[Lemma 4.1]{BFLM}): for every $g\in \GL(V)$ and every $u\in V\setminus \{0\}$, 
\begin{equation}\frac{||g^\ast u||}{||g^\ast||\,||u||}\leq \delta(x_g^+, (\C u)^{\perp}) +\frac{a_2(g)}{a_1(g)}.\label{geometry1}
\end{equation}
Since $\lambda_1(\mu)>\lambda_2(\mu)$,  by $(\ref{nulaij})$, there exist $C>0$  and a summable sequence $(D'_n)_n$ such that the following holds for every $n$: 
 \begin{equation}\label{geometry3}\p(a_2(L_n)/a_1(L_n)<e^{-Cn})>1-D'_n.\end{equation} 
 Let now $\epsilon>0$. 
Applying Proposition  \ref{largedeviations1} to the probability $\check{\mu}$, we deduce that there exists a summable sequence $D''_n(\epsilon)$ such that for every non-zero vector $u$ of $V$ and for every $n \in \mathbb{N}$, 
\begin{equation}\label{geometry2}\p\left(\frac{||L_n^\ast u||}{||L_n^\ast||\,||u||}\leq e^{-n \epsilon/4} \delta([u], [L_{\check{\mu}}])\right)<D''_n(\epsilon).\end{equation}
Let $D_n(\epsilon)=D'_n+D''_n(\epsilon)$. Combining 
 \eqref{geometry1}, \eqref{geometry3} and  \eqref{geometry2},  we deduce that   for every $H=(\C u)^\perp$ and for every $n$: 
$$\p\left(\delta(x_{L_n}^+, H)\leq e^{-n\epsilon/4} \delta([u],[L_{\check{\mu}}]) -e^{-C n}\right)< D_n(\epsilon).$$
Thus, for every $n$, 
$$\sup_{H\in \mathcal{H}_{n,\epsilon}}\p\left(\delta(x_{L_n}^+, H)\leq e^{-3 n\epsilon/4}  -e^{-C n}\right)< D_n(\epsilon),$$
with $\mathcal{H}_{n,\epsilon}$ defined as in the statement. 
Therefore, the lemma is proved for all $0<\epsilon<C$; a fortiori for every $\epsilon>0$.
\end{proof}

\begin{remark}
The content of the previous lemma is closely related to regularity properties of stationary measures on the projective space. It is shown in \cite[Theorem 2.4]{Aoun-Guivarch} that, when $\lambda_1(\mu)>\lambda_2(\mu)$, there exists a unique $\mu$-stationary probability measure $\nu$ on the open subset $\PV\setminus [L_{\mu}]$ of $\PV$. The measure $\nu$ is the limit distribution of $x_{L_n}^+$ and the projective subspace generated by the support of $\nu$ is $U_{\mu}:=[L_{\check{\mu}}^{\perp}]$. This explains the condition on $H$ appearing in the previous lemma. Finally, we remark that since $\nu$ is non-degenerate on $U_{\mu}$ (i.e.\ does not charge any projective hyperplane), the previous lemma is an additional quantitative  information on the regularity of $\nu$.
\end{remark}

\begin{proof}[Proof of Proposition \ref{prop.distance}]
 Since $L_n^\ast=X_1^\ast \cdots X_n^\ast$ has the same distribution as the right random walk for the probability measure $\check{\mu}$, and since $\check{\mu}$ and $\mu$ have the same Lyapunov exponents, Lemma \ref{lemlem2} gives some $C>0$  and a summable sequence $(D_n)_n$ such that  for every $n$,   
\begin{equation}\label{y3}\p\left(\delta(x_{L_n^\ast}^+,
 x_{L_{\lfloor n/2 \rfloor}^\ast}^+)\geq e^{-Cn}\right)\leq D_n.\end{equation}
Let  $M_n:=X_n \cdots X_{\lfloor n/2 \rfloor +1}$. Since the $n$-tuples $(X_1, \ldots, X_n)$ and $(X_n, \ldots, X_1)$ have the same distribution,  the same lemma gives that
\begin{equation}\label{y4}\p\left( \delta(x_{L_n}^+,x_{M_n}^+) \geq   e^{-Cn}\right)=  \p\left( \delta(x_{R_n}^+,x_{R_{n-\lfloor n/2 \rfloor}}^+) \geq   e^{-Cn} \right)\leq D_n.\end{equation}
\noindent Fix now $0<\epsilon<C$. Since for every $n \in \mathbb{N}$,  $H_{L_n}^{<}=(x_{L_n^\ast}^+)^{\perp}$, by \eqref{y3} and \eqref{y4}, we get that for every $n$, 
\begin{equation}\label{zaatar1} 
\p\left(\delta(x_{L_n}^+, H_{L_n}^<) \leq e^{-\epsilon n} \right)  \leq   2 D_n + \p\left( \delta\left(x_{M_n}^+, H_{L_{\lfloor n/2 \rfloor}}^< \right)\leq 3 e^{-\epsilon n} \right). 
\end{equation}
But by Lemma  \ref{while}, we can assume that $\p\left(\delta(x_{L_n^\ast}^+, [L_{\check{\mu}}])>e^{- \epsilon n/2}\right)>1-D_n$. Hence, since  
   $M_n$ and $L_{\lfloor n/2 \rfloor}$ are independent random variables for every $n$,   \eqref{zaatar1} yields that: 
$$\p\left(\delta(x_{L_n}^+, H_{L_n}^<) \leq e^{-\epsilon n} \right)\leq 3D_n+ \underset{H\in \mathcal{H}_{n,\epsilon}}{\sup} {\p\left(\delta(x_{M_n}^+, H)\leq 3e^{- \epsilon n} \right)},$$
where $\mathcal{H}_{n,\epsilon}$ is the set of projective hyperplanes $H=(\C u)^\perp$ such that $\delta([u], [L_{\check{\mu}}])>e^{-n\epsilon/2}$. 
Since $M_n$ has same distribution as $R_{n-\lfloor n/2 \rfloor}$, we conclude by Lemma \ref{lemlem4}. 
\end{proof}

The following is a qualitative version of Proposition \ref{prop.distance} under the weaker finite first moment assumption.

\begin{proposition}\label{prop.distance.1}
Assume $\mu$ has  finite first moment and that $\lambda_1(\mu)>\lambda_2(\mu)$. Then for every $\epsilon>0$, 

$$\lim_{n\to +\infty}{\p\left(\delta(x_{L_n}^+, H_{L_n}^<)\leq e^{-\epsilon n}\right)} = 0.$$
\end{proposition}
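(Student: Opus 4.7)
The plan is to mirror the proof of Proposition \ref{prop.distance} step by step, replacing every appeal to the quantitative large deviation estimate (Proposition \ref{largedeviations1}) by its qualitative first-moment counterpart Proposition \ref{large.deviations.2}. Concretely, I would first establish qualitative analogues of Lemmas \ref{lemlem1}, \ref{lemlem2}, \ref{while} and \ref{lemlem4}, in the following form: for each of them, there exist a constant $C>0$ and a sequence $(D_n)_n$ (depending on $\epsilon$ where applicable) with $D_n\to 0$ as $n\to+\infty$, such that the corresponding inequality holds (for a fixed $x=[v]\notin[L_\mu]$ in the first three lemmas, and uniformly over $H\in\mathcal{H}_{n,\epsilon}$ in the fourth). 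The algebraic inputs -- namely the inequality $\delta(x_g^+,g\cdot x)\leq (a_2(g)/a_1(g))\,\|g\|\|v\|/\|gv\|$, the decomposition estimate in the proof of Lemma \ref{lemlem2}, the quotient norm inequality \eqref{eq.norms}, and the comparison \eqref{geometry1} -- are purely deterministic and carry over verbatim. What changes is only the probabilistic input: instead of a summable deviation sequence we obtain a vanishing one, since Proposition \ref{large.deviations.2} controls $\|L_n v\|/\|v\|$, $\|L_n\|$ and (by applying it to $\bigwedge^2\mu$) also $\|\bigwedge^2 L_n\|$ with probabilities tending to one as $n\to\infty$, uniformly in $[v]\in\PV$ (respectively, after passing to $\check{\mu}$, uniformly in the hyperplane).

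Once these four qualitative lemmas are in hand, I would conclude exactly as in the proof of Proposition \ref{prop.distance}. Using $H_{L_n}^<=(x_{L_n^\ast}^+)^\perp$, the qualitative Lemma \ref{lemlem2} applied to the right random walk of $\check{\mu}$ reduces the problem, modulo an event of vanishing probability, to bounding $\delta(x_{M_n}^+,H_{L_{\lfloor n/2\rfloor}}^<)$ where $M_n=X_n\cdots X_{\lfloor n/2\rfloor+1}$ is independent of $L_{\lfloor n/2\rfloor}$. Thanks to the qualitative Lemma \ref{while} applied to $\check{\mu}$, one may further restrict to the event that $H_{L_{\lfloor n/2\rfloor}}^<\in\mathcal{H}_{n,\epsilon}$, whose complement has probability $o(1)$. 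Conditioning on $L_{\lfloor n/2\rfloor}$ and invoking the qualitative Lemma \ref{lemlem4} applied to $M_n$ (whose law is that of $R_{n-\lfloor n/2\rfloor}$) then yields the desired conclusion that $\p(\delta(x_{L_n}^+,H_{L_n}^<)\leq e^{-\epsilon n})\to 0$.

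The only real subtlety -- and what I expect to be the main obstacle -- is maintaining the \emph{uniformity} in the directional parameter across these steps, because the finite-first-moment input Proposition \ref{large.deviations.2} only provides convergence in probability rather than a summable rate. In particular, Lemma \ref{lemlem4} requires a uniform bound over the family $\mathcal{H}_{n,\epsilon}$ of hyperplanes, which under the $L^2$ hypothesis came for free from the uniform summable bound. In the $L^1$ setting this uniformity is precisely what is guaranteed by the compactness assertion in Proposition \ref{large.deviations.2} (combined with its statement that the convergence \eqref{p1} is uniform in $[v]\in\PV$), applied to $\check{\mu}$. So the argument does go through, but care must be taken to route every quantifier through the uniform form of Proposition \ref{large.deviations.2} and to exploit independence of $M_n$ and $L_{\lfloor n/2\rfloor}$ so that the supremum over hyperplanes can be taken before integrating out the random choice of $H=H_{L_{\lfloor n/2\rfloor}}^<$.
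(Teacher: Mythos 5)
Your proposal is correct and follows precisely the route the paper takes: replace the quantitative input (Proposition~\ref{largedeviations1}) by its qualitative first-moment version (Proposition~\ref{large.deviations.2}) in Lemmas~\ref{lemlem1}, \ref{lemlem2}, \ref{while} and \ref{lemlem4} to obtain $D_n\to 0$ in place of $\sum_n D_n<\infty$, then rerun the proof of Proposition~\ref{prop.distance} verbatim. Your observation that the uniformity in $[v]$ built into \eqref{p1} is what makes the argument go through (in particular for the qualitative Lemma~\ref{lemlem4}) is exactly the point the paper leaves implicit in its one-line remark.
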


To avoid repetition, we limit ourselves to indicate the proof of this result: 

\begin{proof}
One readily checks that using Proposition \ref{large.deviations.2}  (instead of Proposition \ref{largedeviations1}) in the proofs of Lemmas \ref{lemlem1}, \ref{lemlem2}, \ref{while} and \ref{lemlem4}, we obtain the conclusions of these lemmas with $D_n\underset{n\rightarrow +\infty}{\longrightarrow} 0$ instead of $\sum_n{D_n}<+\infty$. Now the proof of Proposition \ref{prop.distance} applies verbatim to yield the desired result.  
\end{proof}

\section{End of the proof}\label{end.proof}
The strategy of the proof is as follows. Using the estimates of  the previous section, we first prove the theorems when  $\lambda_1(\mu)>\lambda_2(\mu)$. We then  deduce the result in the case of a Lyapunov gap.  Finally we check the validity of the result when there is no Lyapunov gap.
 
\begin{proof}[Proof of Theorems \ref{main} and \ref{main1}] 

We first prove the strong law. 

\begin{enumerate}
\item Suppose $\lambda_1(\mu)>\lambda_2(\mu)$. Combining Lemma \ref{geometry}, Proposition \ref{prop.distance} and $(\ref{nulaij})$, we get that for every $\epsilon>0$, $$\sum_{n\in \N}{\p\left(\frac{\rho(L_n)}{||L_n||} \leq e^{-\epsilon n}\right)}<+\infty.$$ 
By Borel--Cantelli's lemma, we deduce that for every $\epsilon>0$, almost surely, there exists a random integer $n_0(\epsilon)$ such that for every $n\geq n_0(\epsilon)$,  $-\epsilon \leq \frac{1}{n} \log \frac{\rho(L_n)}{||L_n||}\leq 0$.
We deduce that the sequence $\frac{1}{n} \log \frac{\rho(L_n)}{||L_n||}$ converges to $0$ almost surely. We conclude by Furstenberg-Kesten's theorem. 
\item Assume now that there exists $s=2, \dots, d-1$ such that $\lambda_1(\mu)=\lambda_2(\mu)=\cdots = \lambda_s(\mu)>\lambda_{s+1}(\mu)$.  Denote by $\eta$ the probability measure  on $GL(\bigwedge^s V)$, given by the pushforward of $\mu$ by the group homomorphism $g\mapsto \bigwedge^s g$. Clearly,  $\eta$ has finite second moment and  $\lambda_1(\eta)>\lambda_2(\eta)$. Applying  Theorem \ref{main1} to $\eta$, we get that   almost surely $\frac{1}{n} \log \rho(\bigwedge^ s L_n) \underset{n\rightarrow +\infty}{\longrightarrow} \lambda_1(\eta)=s \lambda_1(\mu)$. But, for every $n\in \N$,  $\rho(\bigwedge^s L_n)\leq \rho(L_n)^s$. Thus  $\lambda_1(\mu)\leq \underset{n\rightarrow +\infty}{\liminf}{\frac{1}{n} \log \rho(L_n)}$. \\
Since $\underset{n\rightarrow +\infty}{\limsup} {\frac{1}{n} \log \rho(L_n)}\leq \lambda_1(\mu)$, we are done. 
\item 

 Finally, if  $\lambda_1(\mu)=\cdots = \lambda_d(\mu)$,    the desired convergence follows   from the  inequality   $a_d(g)\leq \rho(g)\leq a_1(g)$ true for every $g\in \GL_d(\C)$. 
 
\end{enumerate}
This proves Theorem \ref{main}. By replacing the use of Proposition \ref{prop.distance} by that of Proposition \ref{prop.distance.1} in the proof of Theorem \ref{main}, we deduce the convergence in probability of  the sequence  $\frac{1}{n} \log{\rho(L_n)}$ to $\lambda_1(\mu)$. To conclude the convergence in $\textrm{L}^1$, it is enough to show that the family $\{\frac{1}{n} \log{\rho(L_n)} \, | \, n\in \N\}$ is uniformly integrable. Indeed, for every $g\in \GL_d(\C)$, $\frac{1}{N(g)}\leq \rho(g)\leq N(g)$, where   $N(g)=\max\{||g||, ||g^{-1}||\}$. 
By submultiplicativity  of $N(\cdot)$, this implies that $|\frac{1}{n} \log \rho(L_n)|\leq \frac{1}{n}\sum_{i=1}^n{\log N(X_i)}$ for every $n\in \N$. 
Since the random variables $N(X_i)$ are i.i.d. and integrable, we deduce  from  the strong law of large numbers that the family $\{\frac{1}{n} \log{\rho(L_n)} \, | \, n\in \N\}$ is uniformly integrable. The proof is done. 
\end{proof}

We end this paper by giving an example of an ergodic stationary random walk $L_n=X_n \cdots X_1$, with increments $X_i$  in Markovian dependence, and such that the spectral radius $\rho(L_n)$ fails to satisfy a law of large numbers. This contrasts with the law of large numbers for the norm of $L_n$.\\

\begin{example}\label{markov}
Consider the following $3\times 3$ matrices in $\SL_3(\R)$: 
$$a:= \left( \begin{matrix} 
3 & 0 & 0\\
0 & 1 & 0\\
0 & 0 & 1/3 
\end{matrix}\right) \quad , \quad 
\sigma:=\left(\begin{matrix} 0 & 0 & 1\\
1 & 0 & 0\\
0 & 1 & 0\end{matrix}\right) \quad \text{and} \quad     \omega:=\left(\begin{matrix} 0 & 1 & 0\\
0 & 0 & 1\\
1 & 0 & 0\end{matrix}\right).$$
Let $(X_n)_n$ denote the Markov chain on the finite state $E=\{a,\sigma,\omega\}$ whose transition probabilities are given by
$P(a,a)=1/2$, $P(a,\sigma)=1/2$, $P(\sigma, \omega)=1$, $P(\omega,a)=1$ and all the other transition probabilities are zero. 
This is an irreducible and aperiodic Markov chain with unique stationary probability measure $m:=\frac{1}{2}\delta_{a} + \frac{1}{4}\delta_{\sigma}+\frac{1}{4}\delta_{\omega}$. From now, we assume that $X_1$ has initial distribution $m$, so that $(X_i)_i$ is an ergodic stationary process. 

As usual, for every $n \in \mathbb{N}$, let $L_n=X_n\cdots X_1$. We claim that, almost surely, 
\begin{equation}\label{eq.claim}
 \underset{n\rightarrow +\infty}{\liminf}\frac{1}{n}\log \rho(L_n)=0 <  \underset{n\rightarrow +\infty}{\limsup}{\frac{1}{n}}\log \rho(L_n)=\lambda_1.
\end{equation}
To see this, denote by  $\tau^{(n)}$ the $n^{\textrm{th}}$ return time of the Markov Chain  $(X_n)_{n\geq 1}$ to $a$. It follows by construction of the transition kernel that we have  $\tau^{(n)}\leq 3n+1$ for every $n \geq 1$. Moreover, conditionally on the event $\{X_1=a\}$, we have  $L_{\tau^{(n)}}=a^{n}$. 
As a consequence, one easily deduces that almost surely
$$\underset{n\rightarrow+\infty}{\limsup}
\frac{1}{n} \log\rho(L_n) \geq \frac{1}{3} \log 3>0.$$

Let now $G$ be the group generated by $a,\sigma,\omega$ and $H\subset G$ be the subgroup consisting of  diagonal matrices  so that $G$ is the union of cosets $\tau H$ for $\tau \in \{1, \sigma,\omega\}$.
Each $g\in \sigma H \cup \omega H$ is a generalized permutation matrix with spectral radius equal to one. 
On the other hand, by construction, we have 
$\p\left( \limsup \{L_n \not\in H \} \right)=1$, i.e.\ almost surely $L_n\not\in H$ infinitely often. This shows that a.s.  $$\underset{n\rightarrow+\infty}{\liminf}
\frac{1}{n} \log\rho(L_n)=0,$$ 
and concludes the proof of \eqref{eq.claim}.
\end{example}

\begin{remark}(Further directions)  
Note that the underlying algebraic structure of the Markov chain in the previous example is somewhat degenerate. For instance the group generated by the state space is irreducible but not strongly irreducible. Moreover, the  ergodic stationary Markov measure $\mathbb{P}_m$ on the shift space is not of full-support. These lead to the natural problem of identifying the probabilistic and algebraic assumptions under which the spectral radius of a random walk with ergodic stationary increments satisfies a law of large numbers.
\end{remark}


\end{document}